\newtheorem{lemma}{Lemma}[section]
\newtheorem{teo}[lemma]{Theorem}
\newtheorem{prop}[lemma]{Proposition}
\newtheorem{cor}[lemma]{Corollary}
\theoremstyle{definition}
\newtheorem{quest}[lemma]{Question}
\theoremstyle{remark}
\newtheorem{rem}[lemma]{Remark}
\newcommand{\matN}{\ensuremath {\mathbb{N}}}
\newcommand{\matQ} {\ensuremath {\mathbb{Q}}}
\newcommand{\matZ} {\ensuremath {\mathbb{Z}}}
\newcommand{\matC} {\ensuremath {\mathbb{C}}}
\newcommand{\matH} {\ensuremath {\mathbb{H}}}
\newcommand{\GL}{{\rm GL}}
\newcommand{\matr} [4] {\left(\begin{array}{@{}c@{\ }c@{}} #1 & #2 \\ #3 & #4 \\ \end{array} \right)}
\newcommand{\lens}[2]{L({\scriptstyle #1},{\scriptstyle #2})}
\newcommand{\seifuno}[3]{\big(#1,({\scriptstyle #2},{\scriptstyle #3})\big)}
\newcommand{\seifdue}[5]{\big(#1,({\scriptstyle #2},{\scriptstyle #3}),
                       ({\scriptstyle #4},{\scriptstyle #5})\big)}
\newcommand{\seiftre}[7]{\big(#1,({\scriptstyle #2},{\scriptstyle #3}),
                       ({\scriptstyle #4},{\scriptstyle #5}),
                       ({\scriptstyle #6},{\scriptstyle #7})\big)}
\newcommand{\bigu}[4]{\bigcup\nolimits_{{\tiny{\matr {#1} {#2} {#3} {#4}}}\phantom{\Big|}\!\!}}
\newcommand{\bigb}[4]{\big/_{{\tiny{\matr {#1} {#2} {#3} {#4}}}\phantom{\Big|}\!\!}}
\newcommand{\alza}{\phantom{\Big(}\!\!\!\!\!}
\newfont{\Got}{eufm10 scaled 1200}
\newcommand{\permu}{{\hbox{\Got S}}}
\author{Bruno \textsc{Martelli}\and Carlo \textsc{Petronio}\and Fionntan \textsc{Roukema}}
\title{Exceptional Dehn surgery\\
on the minimally twisted five-chain link}
\begin{document}

\maketitle

\begin{abstract}
\noindent
We consider in this paper the minimally twisted chain link with 5 components in the $3$-sphere,
and we analyze the Dehn surgeries on it, namely the Dehn fillings on its exterior $M_5$.
The $3$-manifold $M_5$ is a nicely symmetric hyperbolic one, filling which one gets a wealth of
hyperbolic $3$-manifolds having $4$ or fewer (including $0$) cusps.
In view of Thurston's
hyperbolic Dehn filling theorem it is then natural to face the problem of classifying
all the \emph{exceptional} fillings on $M_5$, namely those yielding non-hyperbolic $3$-manifolds.
Here we completely solve this problem, also showing that, thanks to the symmetries of $M_5$ and of some
hyperbolic manifolds resulting from fillings of $M_5$,
the set of exceptional fillings on $M_5$ is described by a very small
amount of information.\\
\textbf{MSC (2010):} 57M50 (primary), 57M25 (secondary).
\end{abstract}

\maketitle

\section*{Introduction} \label{introduction:section}

In this paper we establish the following main result. The terminology,
the context, the relevance, and a conceptual outline of the methods underlying
the proof are explained in the next few pages:

\begin{teo}\label{main:intro:teo}
Consider the link shown in Fig.~\ref{chain5_fig:fig}, fix a cyclic ordering on its
components and employ the meridian-longitude homology bases to parameterize the
Dehn surgeries on the link. Then a Dehn surgery is exceptional if and only if
up to a composition of the following maps
\begin{align*}
(\alpha_1, \alpha_2,\alpha_3,\alpha_4,\alpha_5) & \longmapsto (\alpha_5, \alpha_1, \alpha_2, \alpha_3, \alpha_4)\\
(\alpha_1, \alpha_2, \alpha_3, \alpha_4, \alpha_5) & \longmapsto (\alpha_5, \alpha_4, \alpha_3, \alpha_2, \alpha_1)\\
(\alpha_1, \alpha_2,\alpha_3,\alpha_4,\alpha_5) & \longmapsto
    \big(\tfrac1{\alpha_2}, \tfrac1{\alpha_1},1-\alpha_3,\tfrac{\alpha_4}{\alpha_4-1},1-\alpha_5\big)\\
\left(-1, \alpha_2,\alpha_3,\alpha_4,\alpha_5\right) & \longmapsto
    \left(-1, \alpha_3-1,\alpha_4,\alpha_5+1,\alpha_2\right)\\
\left(-1, -2, -2, -2, \alpha \right) & \longmapsto \left(-1, -2, -2, -2, -\alpha - 6 \right)
\end{align*}
it contains one of the next two or it is one of the subsequent five:
$$\begin{array}{rcr}
\infty \quad (-1, -2, -2, -1)  & \left(-2, -\tfrac 12, 3, 3, -\tfrac 12\right) & (-1, -2, -2, -3, -5)\\
(-1, -2, -3, -2, -4) & (-1, -3, -2, -2, -3) & (-2, -2, -2, -2, -2).
\end{array}$$
Moreover, no two of these seven fillings are related to each other by any composition of the above five maps.
\end{teo}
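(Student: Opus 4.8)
The plan is to combine Thurston's hyperbolic Dehn filling theorem with an effective bound on exceptional slopes, a complete determination of the group of transformations acting on the filling parameters, and a rigorously verified computer-assisted enumeration. First I would identify the group $G$ generated by the five displayed maps with the group of symmetries of the filling problem. The first two maps are induced by the evident dihedral symmetry of the chain link of Fig.~\ref{chain5_fig:fig}, hence by isometries of $M_5$ that permute the cusps and respect the meridian--longitude bases. The third map must come from a further isometry of $M_5$ that permutes the cusps but acts non-trivially on their peripheral homology; I would exhibit it explicitly, as an element of the full isometry group of $M_5$ (determined and certified with SnapPy), and check that it induces precisely the displayed M\"obius-type formula on slopes. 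The fourth and fifth maps cannot be symmetries of $M_5$ itself, since their domains are proper (slope $-1$ on one cusp, respectively $(-1,-2,-2,-2)$ on four cusps); I would realize them instead as isometries of the partially filled manifolds $M_5(-1,\ast,\ast,\ast,\ast)$ and $M_5(-1,-2,-2,-2,\ast)$, whose isometry groups strictly contain the images of the corresponding stabilizers in $G$. These sporadic coincidences are exactly what is needed for the bookkeeping in the statement.

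Next I would reduce to a finite problem. By the $6$-theorem of Agol and Lackenby, filling some cusps of a finite-volume hyperbolic manifold along slopes each of length $>6$ in a fixed system of disjoint embedded horoball neighbourhoods yields a manifold that is again hyperbolic (invoking geometrization). Hence, if $M_5(s_1,\dots,s_5)$ is exceptional, then in the maximal horoball system of $M_5$ some slope has length $\le 6$; filling that cusp gives either a non-hyperbolic partial filling, to be recorded, or a hyperbolic four-cusped manifold to which the same argument applies, and so on. I would thus grow the finite tree of partial fillings in which each edge fills one cusp along a slope that is short in the current maximal cusp --- finiteness holding because a cusp torus of area bounded below contains only finitely many slopes of length $\le 6$ and the tree has depth at most five --- pruning any branch once all remaining unfilled cusps admit only long slopes, and quotienting at every node by $G$ and by the isometry groups of the intermediate manifolds. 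Partial fillings that collapse to the magic manifold or to $M_4$, whose exceptional fillings are already classified, can be dispatched wholesale and cut the tree down substantially.

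For each leaf I would then decide hyperbolicity rigorously --- by verified interval-arithmetic certification of the hyperbolic structure when the manifold is hyperbolic, and, when it is not, by exhibiting its Seifert or graph structure, typically read off the surgery presentation after Rolfsen twists --- and record the homeomorphism type of the non-hyperbolic ones. Separating the non-hyperbolic nodes into those that already occur with a cusp unfilled, so that every completion is exceptional, and the complete fillings whose every one-cusp-unfilling is hyperbolic, produces the two ``absorbing'' patterns and the five sporadic complete fillings; matching this finite list against the $G$-orbits yields the ``if and only if''. For the \emph{moreover} clause I would compute the full $G$-orbit of each of the seven representatives --- finite because of the length bound together with the restricted domains of the last two maps --- and check the orbits are pairwise disjoint; equivalently, separate the seven by invariants such as the homeomorphism type of the filled manifold, refined where necessary by the $G$-action on the slope tuple, the only delicate cases being representatives that yield homeomorphic manifolds.

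The main obstacle is making the reduction genuinely exhaustive and effective: one needs a priori control of the cusp geometry of every hyperbolic manifold appearing in the tree to be sure that ``length $\le 6$'' leaves only finitely many candidate slopes and that no exceptional filling escapes, and the hyperbolicity of the many leaves must be certified by verified rather than floating-point computation. Equally essential is completeness of the symmetry list: an undetected hidden symmetry of $M_5$, or of one of the relevant partially filled manifolds, would invalidate both the orbit description and the \emph{moreover} clause, so those isometry groups must be pinned down exactly, not merely bounded from below.
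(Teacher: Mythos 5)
Your proposal is correct and follows essentially the same strategy as the paper: reduction to finitely many slopes of length at most $6$ via the Agol--Lackenby $6$-theorem (plus geometrization), an iterative search over partial fillings organized through the previously classified $M_2$, $M_3$, $M_4$, rigorous certification of the hyperbolic leaves (the paper uses Moser's Kantorovich-based test with explicit error propagation rather than interval arithmetic, a difference of implementation only), recognition of the non-hyperbolic fillings as Seifert/graph manifolds, and realization of the last three maps as the hidden isometry of $M_5$, the order-$4$ symmetry of $M_4=M_5(-1)$, and the amphichirality of the figure-eight knot exterior $M_5(-1,-2,-2,-2)$. The moreover clause is likewise handled as you suggest, by inspecting orbits and distinguishing the filled manifolds.
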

\begin{figure}
\begin{center}
\includegraphics[width = 4cm]{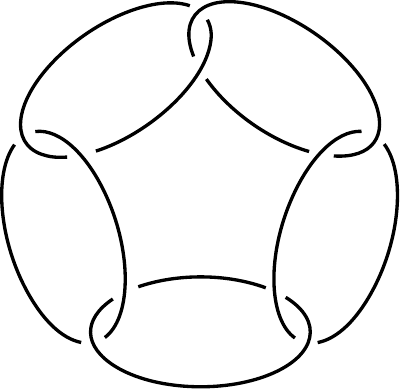}
 \end{center}
 \caption{The minimally twisted chain link with 5 components L10n113.}
 \label{chain5_fig:fig}
\end{figure}

\noindent
\textbf{Dehn surgery and Dehn filling}\quad
The operation of \emph{Dehn surgery} on a link in the $3$-sphere, and its natural generalization,
termed \emph{Dehn filling}, are fundamental ones in $3$-dimensional geometric topology. The input of an
operation of Dehn filling is given by
\begin{itemize}
\item a compact 3-manifold $M$ (that we will always tacitly assume to be
orientable and connected) with boundary $\partial M$ consisting of tori, and
\item a \emph{slope} (namely, the isotopy class of a nontrivial simple closed unoriented curve) on each component of $\partial M$.
\end{itemize}
The result of the operation is the manifold obtained by attaching to $M$ a copy of the solid torus
$D^2\times S^1$ along each component of $\partial M$,
with a solid torus attached to a component $T$ of $\partial M$ so
that the meridian $(\partial D^2)\times\{*\}$ is matched to the slope $\alpha$ contained in $T$.
To allow $T$ to be left unfilled, one also allows $\alpha$ to be the empty slope. A Dehn surgery
on a link $L$ in the $3$-sphere is a Dehn filling on
its exterior (the complement of an open regular neighbourhood).
To highlight the importance of this
operation we recall for instance the celebrated theorem of
Lickorish~\cite{Rolfsen}, according to which every closed (orientable) $3$-manifold is the
result of a Dehn surgery on some link in $S^3$.
\emph{In the rest of this paper, when a manifold $M$ bounded by tori is given, by a \emph{(Dehn) filling}
on $M$ we will mostly refer to the set of slopes along which the filling has to be performed.
Occasionally we will also use the same term to refer to the manifold resulting from the operation of
filling along the given slopes, but when there is any risk of confusion we will distinguish between the filling (viewed as an instruction) and its result.}

\medskip

\noindent
\textbf{Hyperbolic manifolds and exceptional fillings}\quad
We say that $M$ is \emph{hyperbolic}~\cite{BePe} if its interior admits a complete hyperbolic metric with finite volume,
in which case the components of $\partial M$ correspond to the \emph{cusps} of the interior of $M$. The
famous \emph{hyperbolic Dehn filling theorem}
of Thurston~\cite{Thu} states that if $M$ is hyperbolic then ``most'' Dehn fillings on $M$ give manifolds that are
also hyperbolic. A Dehn filling not producing a hyperbolic manifold is called \emph{exceptional}.
We continue here the program initiated in~\cite{MaPe} of classifying the exceptional fillings on hyperbolic
manifolds with an increasing number of boundary components. Namely, we provide a complete classification of all the
exceptional fillings on the exterior $M_5$ of the
\emph{minimally twisted chain link} with 5 components in the $3$-sphere, shown in Fig.~\ref{chain5_fig:fig}
and denoted by L10n113 in Thistlewaite's tables.

The relevance of the 5-cusped hyperbolic manifold $M_5$ comes from the following facts. First, it is conjecturally~\cite{Ago}
the 5-cusped manifold with smallest volume $10.149\ldots$ and smallest Matveev complexity $10$
(it can be triangulated using 10 regular ideal hyperbolic tetrahedra, see below). Second, by filling $M_5$ one obtains
a multitude of interesting manifolds, including most manifolds from the cusped census~\cite{CaHiWe}, and
many one-cusped manifolds having interesting exceptional fillings, such as various families of Berge~\cite{Bak}
knot exteriors, all Eudave-Mu\~noz~\cite{EM, GoLu} knot exteriors, and all exceptional reducible-toroidal pairs
at maximal distance~\cite{Kang}. Third, the manifold $M_5$ has a nice and unusually large
symmetry group, because it double covers
a very natural and symmetric orbifold, called the \emph{pentangle},
with underlying space $S^3$ viewed as the boundary of the 4-simplex
and as singular set the 1-skeleton of the 4-simplex.

The exceptional fillings on $M_5$ are classified by Theorem~\ref{main:intro:teo}. Our proof of this result was
computer-assisted but rigorous, namely immune from drawbacks coming from numerical approximation.
We used a python code named {\tt find\_exceptional\_fillings.py}, written by the first author and publicly available from \cite{M}. The code uses the \emph{SnapPy}
libraries~\cite{SnapPy}, takes as an input any hyperbolic
manifold $M$ (with an arbitrary number of cusps), and gives as an output a list of candidate
exceptional fillings on $M$, including all truly exceptional ones.
The fact that indeed all truly exceptional fillings are included follows from the
use of the \emph{hikmot} library~\cite{HIKMOT}, that allows one to make SnapPy's numerical calculations rigorous.
For $M_5$, the list of candidate exceptional fillings actually turned out to be extremely small up to
the action of some symmetry groups, and the conclusion of the proof of
Theorem~\ref{main:intro:teo} was then obtained by explicitly recognizing the manifolds
resulting from the candidate exceptional fillings, thus proving them not to be hyperbolic; this process was carried
out both by hand and using the Matveev-Tarkaev~\cite{Rec} nice
\emph{Recognizer} program. We point out that the code {\tt find\_exceptional\_fillings.py} can be used on any hyperbolic manifold.
We have tested the code on various manifolds and
found that the output list of candidate exceptional fillings is typically correct
(\emph{i.e.},~all the fillings in the list are indeed exceptional).
The proof of Theorem~\ref{main:intro:teo} contained
in Section~\ref{proof:section} below may be viewed as a tutorial introduction to the code.

One of the key features of Theorem~\ref{main:intro:teo} is in our opinion the
unexpected shortness of its statement. In fact, our results show that
very little information is needed to list all the
exceptional fillings on $M_5$, and also to give a very
precise description of all the corresponding filled manifolds.
This is due to the many symmetries possessed by
$M_5$ and by some hyperbolic manifolds obtained as Dehn fillings of $M_5$.
In fact, Theorem~\ref{main:intro:teo} shows that
\emph{only $7$ exceptional fillings on $M_5$ are responsible for all the other ones}
---and the geometric meaning of the maps appearing in the statement
will be made more precise soon.
In addition, \emph{a single} exceptional filling on $M_5$ ---the first one listed in Theorem~\ref{main:intro:teo}--- is
responsible for the vast majority of the other ones: the remaining $6$ exceptional fillings can be viewed as being very sporadic.
Since most of the manifolds in the Callahan-Hildebrand-Weeks cusped census~\cite{CaHiWe} can be
obtained as fillings of $M_5$,
we can conclude that most of the exceptional fillings on the manifolds in the census are determined by this single
exceptional filling on $M_5$. For instance, all the 10 exceptional surgeries on the figure-8 knot are consequences of it.

\medskip

\noindent
\textbf{A notable finite(?) sequence of cusped hyperbolic manifolds}\quad
Figure~\ref{sequence:fig} shows five links in the $3$-sphere,
at least the first three of which are famous ones:
the \emph{figure-eight knot} K4a1$\,=4_1$,
the \emph{Whitehead link} L5a1\,=$\,5^2_1$, and the \emph{chain
link} with 3 components L6a5\,$=6^3_1$; we then have the chain L8n7\,$=8^4_2$
link with 4 components, and (of course) the minimally
twisted $5$-chain link L10n113 (for the first four links we are indicating both Thistlewaite's and
Rolfsen's names). Let us now denote by $M_i$ the exterior of the
$i$-th link in Fig.~\ref{sequence:fig}  (the notation is consistent for $M_5$). It is well-known that
each $M_i$ is an $i$-cusped hyperbolic manifold, and the finite sequence
$\left(M_i\right)_{i=1}^5$ has several interesting features.
We first note that $M_i$ is a filling of $M_{i+1}$ for all $i\leqslant 4$. We then recall that
each $M_i$ is conjectured~\cite{Ago} to have the smallest volume among
$i$-cusped hyperbolic manifolds; the conjecture was proved in~\cite{CaMe} for $i=1$ and in~\cite{Ago}
for $i=2$, and it is open for $i=3,4,5$. But the most remarkable properties of
$\left(M_i\right)_{i=1}^5$ arise when one considers their exceptional fillings.
The manifold $M_3$ was already called the \emph{magic} one in~\cite{GoLu, GoWu}, because it has
many notable exceptional fillings. The exceptional fillings on $M_1$ were classified by
Thurston~\cite{Thu}, those on $M_2$ and $M_3$ were classified by Martelli and Petronio~\cite{MaPe},
and those on $M_4$ and $M_5$ are classified here.

\begin{figure}
 \begin{center}
\includegraphics[width = 12 cm]{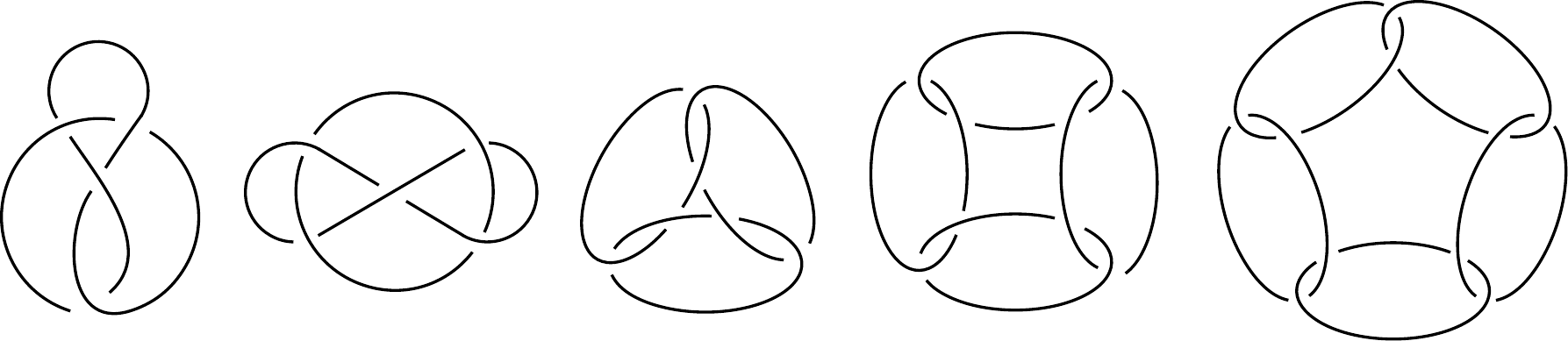}
 \end{center}
 \caption{The links in $S^3$ whose exteriors are denoted by $M_1$, $M_2$, $M_3$, $M_4$, and $M_5$.}
 \label{sequence:fig}
\end{figure}

Our next aim is to explain our discovery that \emph{the amount of information required
to describe the exceptional fillings on $M_i$ is roughly constant for $i=1,2,3,4,5$.}
Considering that the number of cusps and the volume of $M_i$ increase with $i$, we
believe that this is a rather remarkable fact.
However, to substantiate our statement we need to be a little more specific.
In fact, as soon as a hyperbolic manifold $M$ has more than one cusp, infinitely many fillings on $M$
are typically exceptional, but finitely many fillings are responsible for all other ones. To make this explicit,
we define an exceptional filling on $M$ to be \emph{isolated} if all its proper sub-fillings
(namely, those obtained by replacing at least one non-empty slope with an empty one)
are hyperbolic. A filling having an exceptional proper sub-filling is exceptional unless a very
special situation occurs, which is never the case for our $M_i$'s. We can then conclude that
\emph{a filling on $M_i$ is exceptional if and only if it contains an isolated exceptional filling}.

Getting to the actual data we have obtained, we start by
listing in Table~\ref{exceptional_1:table}
the numbers of isolated exceptional fillings on $M_i$ for $i=1,2,3,4,5$, split
according to the number $k$ of cusps filled.
\begin{table}
\begin{center}
\begin{tabular}{r||c|c|c|c|c||c}
& \multicolumn{5}{|c||}{Number of cusps filled} & \\ \cline{2-6}
Manifold & \begin{minipage}{.7cm}\begin{center}1\end{center}\end{minipage} &
    \begin{minipage}{.7cm}\begin{center}2\end{center}\end{minipage} &
        \begin{minipage}{.7cm}\begin{center}3\end{center}\end{minipage}     &
            \begin{minipage}{.7cm}\begin{center}4\end{center}\end{minipage} &
                \begin{minipage}{.7cm}\begin{center}5\end{center}\end{minipage} & Total\\ \hline\hline
$M_1$ & 10 &     &     &    & & 10\\
$M_2$ & 12 & 14  &    &     & & 26\\
$M_3$ & 15 & 15 & 52 & &  & 82\\
$M_4$ & 16 & 24 & 96 & 492 & & 628\\
$M_5$ & 15 & 30 & 180 & 780 & 5232 & 6237
\end{tabular}
\end{center}
\caption{Number of isolated exceptional fillings on $M_i$ according to the number of filled cusps.}
\label{exceptional_1:table}
\end{table}
As one sees, these numbers grow with $i$ and $k$, and the total number of isolated
exceptional fillings of $M_i$ appears to grow exponentially with $i$.
These numbers already reduce very considerably if we take into account the
action of the symmetry group of $M_i$, identifying exceptional fillings
equivalent under it, as shown in Table~\ref{exceptional_2:table}.
\begin{table}
\begin{center}
\begin{tabular}{r||c|c|c|c|c||c}
& \multicolumn{5}{|c||}{Number of cusps filled} & \\ \cline{2-6}
Manifold & \begin{minipage}{.7cm}\begin{center}1\end{center}\end{minipage} &
    \begin{minipage}{.7cm}\begin{center}2\end{center}\end{minipage} &
        \begin{minipage}{.7cm}\begin{center}3\end{center}\end{minipage}     &
            \begin{minipage}{.7cm}\begin{center}4\end{center}\end{minipage} &
                \begin{minipage}{.7cm}\begin{center}5\end{center}\end{minipage} & Total\\ \hline\hline
$M_1$ & 6    &     &     &    & & 6\\
$M_2$ & 6    & 8  &     &    & & 14\\
$M_3$ & 5    & 3  & 14 &  & & 22\\
$M_4$ & 2    & 2  &   4   &  22 & & 30\\
$M_5$ & 1    & 1  & 3  & 7 & 52 & 64
\end{tabular}
\end{center}
\caption{Number of isolated exceptional fillings on $M_i$ according to the number of filled cusps,
after identifying fillings obtained from each other under
the action of the symmetry group of $M_i$.}
\label{exceptional_2:table}
\end{table}
But the dramatic conclusion that \emph{the number of
``really inequivalent'' exceptional fillings of $M_i$ is
roughly constant for $i=1,2,3,4,5$} follows by taking into account another phenomenon.
In fact, it can and does happen that a hyperbolic manifold $N$ obtained by filling
some $M_i$ has symmetries that are not induced by symmetries of $M_i$.
If this situation, two isolated exceptional fillings on $N$ that are equivalent under
such symmetries of $N$ both contribute to the counting
in Table~\ref{exceptional_2:table}, but after all we can still identify them,
letting also the symmetries of $N$ act. Similarly, there can be an isolated exceptional
filling on $N$ that contributes to Table~\ref{exceptional_2:table} but is, under symmetries of
$N$, equivalent to a non-isolated filling on $M_i$, in which case we can disregard it.
By systematically taking into account the symmetries of the
hyperbolic fillings of the $M_i$'s we then get the figures of
Table~\ref{exceptional_3:table} (with the $7$ fillings on $M_5$ being precisely
those described in Theorem~\ref{main:intro:teo}).
\begin{table}
\begin{center}
\begin{tabular}{r||c|c|c|c|c||c}
& \multicolumn{5}{|c||}{Number of cusps filled} & \\ \cline{2-6}
Manifold & \begin{minipage}{.7cm}\begin{center}1\end{center}\end{minipage} &
    \begin{minipage}{.7cm}\begin{center}2\end{center}\end{minipage} &
        \begin{minipage}{.7cm}\begin{center}3\end{center}\end{minipage}     &
            \begin{minipage}{.7cm}\begin{center}4\end{center}\end{minipage} &
                \begin{minipage}{.7cm}\begin{center}5\end{center}\end{minipage} & Total\\ \hline\hline
$M_1$ & 6    &     &     &    & & 6 \\
$M_2$ & 6    & 2  &     &    & & 8 \\
$M_3$ & 5    & 1  & 3 &  & & 9\\
$M_4$ & 2    & 0  & 1 & 3 & & 6\\
$M_5$ & 1    & 0  & 0  & 1 & 5 & 7\\
\end{tabular}
\end{center}
\caption{Number of isolated exceptional fillings on $M_i$ according to the number of filled cusps,
after identifying fillings obtained from each other under
the action of the symmetry group of $M_i$ \emph{or} of hyperbolic manifolds obtained by filling $M_i$.}
\label{exceptional_3:table}
\end{table}
This proves that the
minimal number of exceptional fillings needed to generate
(via symmetries) all the exceptional fillings on $M_1$, $M_2$, $M_3$, $M_4$, and $M_5$ is indeed roughly constant.

\begin{table}
\begin{center}
\begin{tabular}{r||c|c|c|c}
Manifold & $M_2$ & $M_3$ & $M_4$ & $M_5$ \\
\hline
Computer time & $1''$ & $24''$ & $2'\,10''$ & $3'\,48''$
\end{tabular}
\end{center}
\caption{Computer time needed to classify the exceptional fillings.}
\label{computer:table}
\end{table}

We underline now that not only is the number of exceptional fillings
on $M_i$ for $i=1,2,3,4,5$ responsible for all other
exceptional fillings extremely small, but also that, once
the investigation is properly organized, the computer time
needed to detect them is very limited as well, see Table~\ref{computer:table}. Encouraged by these facts,
we believe that it should be possible to carry out a similar analysis
for manifolds having $6$ or more cusps, and we put forward two questions.
To state them, we define a sequence $\left(M_i\right)_{i=1}^{+\infty}$ to be
\emph{universal} if every compact 3-manifold with (possibly empty)
boundary consisting of tori can be obtained by Dehn filling on some $M_i$.
We then have the following (slightly vague):

\begin{quest}
Is there a universal sequence $\left(M_i\right)_{i=1}^{+\infty}$ with
$M_i$ an $i$-cusped hyperbolic 3-manifold such that
the exceptional fillings on each $M_i$ can be described using a small amount of data?
\end{quest}

A much more ambitious question is the following:

\begin{quest}
Is there a universal sequence $\left(M_i\right)_{i=1}^{+\infty}$ with
$M_i$ an $i$-cusped hyperbolic 3-manifold such that
the exceptional fillings on all the $M_i$'s can be simultaneously described using
a finite amount of data?
\end{quest}

\medskip

\noindent
\textbf{On the computer assisted-proof}\quad
In the original 2011 version of this paper, to rigorously verify the numerical computations made by SnapPy,
we used an algorithm constructed in~\cite{Mo, Mo:page} and implemented in Pari~\cite{Pari}.
It was then pointed out to us that this algorithm might not be 100\% immune from round-off errors. Motivated by another project \cite{IM}, Hoffman, Ichihara, Kashiwagi, Masai, Oishi, and Takayasu wrote in 2013 a different algorithm \cite{HIKMOT} that has at least three advantages: it uses interval arithmetic (and is hence immune from
round-off errors), it is very fast, and it is written directly in python 
(so we were able to incorporate it directly in our previous code). 
The 2013 versions of our paper and codes use this new algorithm.

\medskip

\noindent
\textsc{Acknowledgements} \
We warmly thank Neil Hoffman and Hidetoshi Masai for many helpful conversations, and the anonymous referees for pointing out a
mistake in our tables.

\section{Main results} \label{main:section}
We describe here in greater detail the hyperbolic manifold $M_5$, namely the exterior of the
\emph{minimally twisted chain link} with 5 components L10n113 shown in Fig.~\ref{chain5_fig:fig}.
In particular, we list its symmetries  and we
analyze some of its notable exceptional and hyperbolic fillings. We then state Theorem~\ref{main:teo},
that classifies all the exceptional fillings on $M_5$.

\subsection{Hyperbolic structure}
The reflection (or rotation of angle $\pi$) across
the dotted circle in Fig.~\ref{pentangle:fig}-left leaves the link invariant
and thus gives an involution $\iota$ on $M_5$. The quotient of $M_5$ under $\iota$ is an orbifold
whose singular set consists of $10$ order-2 edges, bounded by $5$ spheres with $4$ order-$2$ cone points,
each of which is the quotient of a toric component of $\partial M_5$ under the elliptic involution.
Collapsing these spheres to points we get an orbifold
with total space $S^3$ and singular set the \emph{pentangle} graph
shown in Fig.~\ref{pentangle:fig}-right. This implies that the interior of $M_5$
is the double branched cover of $S^3$ minus the vertices of the pentangle, branched
along the edges of the pentangle.
\begin{figure}
 \begin{center}
\includegraphics[width = 12 cm]{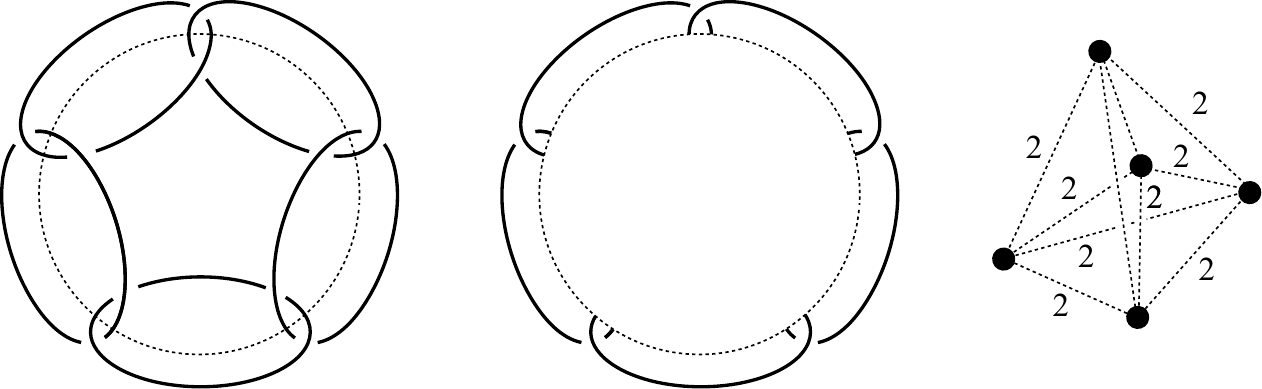}
 \end{center}
 \caption{Left: a circle the reflection across which leaves the link invariant.
 Center: the corresponding quotient graph.
 Right: the pentangle graph, obtained from the quotient graph by contracting each solid arc to a vertex.}
 \label{pentangle:fig}
\end{figure}

We will now construct the hyperbolic structure on (the interior of) $M_5$
as the double cover of the hyperbolic structure on the pentangle orbifold
(minus the vertices), as described in~\cite{DuTh}. To this end,
consider $S^3$ to be the boundary of the 4-dimensional simplex and the pentangle graph to be
the 1-skeleton of this 4-simplex. Next, realize each of the $5$ codimension $1$-faces of the
$4$-simplex as a regular ideal hyperbolic tetrahedron in $\mathbb H^3$, and glue the faces
of these ideal tetrahedra using
isometries. Each ideal tetrahedron has dihedral angle $\frac\pi3$ along each of its edges, and
every edge of the pentangle graph is adjacent to three ideal tetrahedra, so the cone angle
along each edge of the pentangle graph is $3\times \frac{\pi}{3} = \pi$.
The link of each vertex of the pentangle consists of $4$ Euclidean equilateral triangles that
glue nicely to give a Euclidean structure with $4$ cone points of angle $\pi$ on the $2$-sphere,
so we get an orbifold hyperbolic structure on the pentangle graph.
Pulling back this structure to $M_5$ we see that its hyperbolic structure is obtained by
gluing together 10 regular ideal hyperbolic tetrahedra.
In particular, the volume of $M_5$ is equal to $10 \times v_3= 10.149416 \ldots$

\subsection{Symmetries and slopes}
Every permutation of the vertices of the pentangle graph is realized by a unique isometry of the pentangle
orbifold, whose symmetry group is therefore $\permu_5$. Every isometry of the quotient
lifts to $M_5$, whose symmetry group is seen to be
isomorphic to $\permu_5 \times \matZ/\!_2$, with the factor $\matZ/\!_2$
generated by the involution $\iota$ and $\permu_5$ permuting the cusps, as one can check using SnapPy~\cite{SnapPy}.
Note that the symmetry group of $M_5$ is larger than the symmetry group of L10n113: the latter group is isomorphic to
$D_5 \times \matZ/\!_2$, with $\matZ/\!_2$ again generated by $\iota$ and $D_5$ being the order-$10$ dihedral group,
generated by a rotation of angle $\frac{2\pi}5$ around an axis orthogonal to the projection plane
in Fig.~\ref{chain5_fig:fig}, and by the reflection across a suitable plane containing this axis.

Let us now number from $1$ to $5$ the components of L10n113, in such a way
that the $i$-th component is linked with the $(i+1)$-th (there is a $D_5$ ambiguity for doing this,
that we will soon view to be immaterial). Correspondingly, we have a numbering from $1$ to $5$
of the components of $\partial M_5$.
We also fix a meridian-longitude oriented homology basis $(\mu_i,\lambda_i)$ on the $i$-th
component of $\partial M_5$, viewed
as the boundary of the exterior of a (trivial) knot in $S^3$. This basis is defined up to simultaneous sign
reversal, so if a slope represents $\pm(p_i\mu_i+q_i\lambda_i)$ in homology we have an
element $p_i/q_i$ of $\matQ \cup \{\infty\}$ uniquely defined by the slope, and conversely.
This shows that a filling on one boundary component is described by an element of
$\Phi=\matQ \cup \{\infty,\emptyset\}$. Note that $\infty$ is the meridian and $0$ is the longitude.
Using the numbering we then see that a filling
on $M_5$ is described by a $5$-tuple $(\alpha_1,\alpha_2,\alpha_3,\alpha_4,\alpha_5)\in\Phi^5$.
The corresponding filled manifold will be denoted by
$M_5(\alpha_1, \alpha_2,\alpha_3,\alpha_4, \alpha_5).$

Every symmetry of $M_5$ sends a slope on a component of $\partial M_5$ to some (other) slope on some
(other) component of $\partial M_5$, so we have an action of $\permu_5$ on $\Phi^5$ (one easily sees that
$\iota$ acts trivially on all slopes, so we dismiss it). To describe this action we start with the easy part coming from
the (dihedral) symmetries of the link: each such symmetry
send meridians to meridians and longitudes to longitudes, therefore the action of $D_5$ on $\Phi^5$
is generated by the maps
\begin{align}
(\alpha_1, \alpha_2,\alpha_3,\alpha_4,\alpha_5) & \longmapsto (\alpha_5, \alpha_1, \alpha_2, \alpha_3, \alpha_4) \label{first:eqn} \\
(\alpha_1, \alpha_2, \alpha_3, \alpha_4, \alpha_5) & \longmapsto (\alpha_5, \alpha_4, \alpha_3, \alpha_2, \alpha_1).\label{firstbis:eqn}
\end{align}
This implies in particular that the above-chosen ordering of the components of $\partial M$ is irrelevant.
To generate the full action of $\permu_5$ on $\Phi^5$ it is now sufficient to add the action on slopes
of a symmetry that switches two boundary components leaving the other three invariant. Using
SnapPy one sees that one such map is as follows:
\begin{equation} \label{transposition:eqn}
(\alpha_1, \alpha_2,\alpha_3,\alpha_4,\alpha_5) \longmapsto
\left(\tfrac1{\alpha_2}, \tfrac1{\alpha_1},1-\alpha_3,\tfrac{\alpha_4}{\alpha_4-1},1-\alpha_5\right).
\end{equation}
This is an orientation-reversing isometry.
Note that the action on slopes on the invariant components is a non-trivial one; moreover
any of the slopes in the argument of the map is allowed to be $\emptyset$, in which case
there is a corresponding $\emptyset$ slope in the value.

\subsection{Notation for some graph manifolds}
\label{graph:subsection}
Let $\Sigma$ be an oriented surface, possibly with boundary. Given $k\in\matN$ and coprime pairs of integers
$(p_i,q_i)$ for $i=1,\ldots,k$ with $p_i\neq 0$ for all $i$, we denote by
$$\big(\Sigma, (p_1,q_1), \ldots, (p_k,q_k)\big)$$
the Dehn filled manifold $(\Sigma'\times S^1)(p_1\mu_1+q_1\lambda_1,\ldots,p_k\mu_k+q_k\lambda_k)$, where
$\Sigma'$ is $\Sigma$ with $k$ open discs removed with the induced orientation,
$\mu_i$ is the oriented component of $\partial\Sigma'$ corresponding to
the $i$-th disc removed, and $\lambda_i$ is the oriented $S^1$ on the same torus.
The result is a Seifert manifold with an exceptional fibre for each $i$ such that $|p_i|\geqslant 2$.
As an example of this construction we note that the Poincar\'e homology sphere can be described as
$$\seiftre{S^2}2{-1}3151.$$
If we allow one $p_i$ to be $0$, say $p_1=0$ and $q_1=1$, then $(\Sigma,(p_1,q_1),\ldots,(p_k,q_k))$
turns out to be the connected sum of the lens spaces $\lens{p_2}{q_2},\ldots,\lens{p_k}{q_k}$ and of
$2g$ copies of $S^2\times S^1$ if the genus of $\Sigma$ is $g$, with $h$ unlinked unknots removed
if $\partial\Sigma$ has $h$ components.

Our notation to encode (some) Seifert manifolds can now be promoted to encode (some)
graph manifolds. In fact, note that the boundary of $\big(\Sigma, (p_1,q_1), \ldots, (p_k,q_k)\big)$ is
given by $(\partial\Sigma)\times S^1$, so it consists of tori, and that on each of them
there is a preferred homology basis given by an oriented component of $\partial\Sigma$ and the
oriented $S^1$. Moreover any two boundary components are mapped to each other by a homeomorphism
of the manifold that matches the homology bases.
Given two such manifolds $M$ and $M'$ and a matrix $X \in \GL(2,\matZ)$ we can therefore
define without ambiguity the gluing $M\bigcup_XM'$, along a homeomorphism from
a boundary component of $M$ to one of $M'$ whose action on homology
is expressed by $X$ with respect to the given bases. Note that $M$ and $M'$ are oriented, so
$M\bigcup_XM'$ is naturally oriented if $\det(X)=-1$, while it is merely orientable
if $\det(X)=+1$. But we can always reverse the orientation of $M$, which corresponds to changing each $(p_i,q_i)$ to
$(p_i,-q_i)$, to get $\det(X)=-1$, that we will always do for aesthetic reasons even if
we only care about orientable but unoriented manifolds. In a similar way one can define
a manifold $M/_X$ by gluing together along $X \in \GL(2,\matZ)$ two boundary components of
the same $M$, but in this case one \emph{must} have $\det(X)=-1$ to get an orientable result.
Denoting by $D$ and $A$ the disc and the annulus, respectively, we can consider
for instance the following closed graph manifolds:
$$ \seifdue{D}212{-1} \bigu 1514 \seifdue{D}2132, \qquad \seifuno A 2{1} \bigb 1211.$$
Note that if a gluing of two Seifert manifolds (or a gluing of a Seifert manifold to itself) is
performed along $X\in \GL(2,\matZ)$, then the absolute value of the top-right entry in $X$
has an instrinsic meaning, since it represents
the geometric intersection number on the gluing torus
between the two Seifert fibres coming from opposite sides of the torus.

\subsection{Some notable exceptional fillings} \label{notable:subsection}
We describe here some exceptional fillings on $M_5$. Theorem~\ref{main:teo} will then assert
that these are the only ones up to the maps (\ref{first:eqn})-(\ref{transposition:eqn})
induced by the symmetries of $M_5$ and up to further maps described below coming from symmetries
of hyperbolic fillings of $M_5$. In the sequel
for $k<5$ and $\alpha_1,\ldots,\alpha_k\in\Phi\setminus\{\emptyset\}$ we will interpret
$(\alpha_1,\ldots,\alpha_k)$ as the filling $(\alpha_1,\ldots,\alpha_k,\emptyset,\ldots,\emptyset)\in\Phi^5$ on $M_5$.

To begin, we note that filling the exterior of a
link with a slope $\infty$ corresponds to canceling the link component corresponding to the filled
boundary component of the exterior.
Therefore $M_5(\infty)$ is the exterior of the \emph{open} chain link
with 4 components shown in Fig.~\ref{open_chain:fig}; denoting by $P$ the pair-of-pants one then easily sees
that $M_5(\infty)$ is the graph manifold
$$F = (P \times S^1)\bigu 0110 (P\times S^1).$$
Applying the map (\ref{transposition:eqn}) with $\alpha_1=\infty$ and the other $\alpha_i$'s empty,
and then with $\alpha_4=\infty$ and the other $\alpha_i$'s empty, we see that the slopes $0$ and $1$
are equivalent to $\infty$ up to the symmetries of $M_5$, therefore $M_5(0)=M_5(1)=F$.
More exactly we already have $3$ exceptional slopes on each component of $\partial M_5$, for a total
of $15$, giving $F$ as a filling, and one easily sees that no other slope is obtained from them
under the maps (\ref{first:eqn})-(\ref{transposition:eqn}).
\begin{figure}
 \begin{center}
\includegraphics[width = 7 cm]{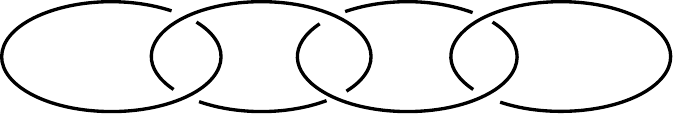}
 \end{center}
 \caption{An $\infty$ filling gives the exterior of an open chain with $4$ components
 (the connected sum of three copies of the Hopf link L2a1\,$=2^2_1$).}
 \label{open_chain:fig}
\end{figure}

\begin{rem}
If $\alpha\in\Phi^5$ contains some slope $0$, $1$, or $\infty$, then
$M_5(\alpha)$ is a filling of $F$, and a considerable variety of different
filled manifolds can already be obtained. The closed ones, for instance, have the form
$$ \seifdue D abcd \bigu 0110 \seifdue D efgh$$
for arbitrary filling coefficients $a,\ldots,h$.
If $|a|,|c|,|e|,|g|$ are all at least $2$ then this
is an irreducible 3-manifold whose JSJ decomposition
consists of two blocks and is transparent from the notation.
Allowing some of $a,c,e,g$ to be $0$ or $\pm1$ we also
get all small Seifert spaces and all reducible manifolds of
the form $\lens pq\# \lens rs$.
\end{rem}

As we will see, the single exceptional filling $\infty$
is responsible for the vast majority of the exceptional fillings on $M_5$.
There are however a few sporadic cases of an independent nature, that we will now describe, to do
which we will first study some notable hyperbolic fillings of $M_5$, starting
from $M_5(-1)$. To understand $M_5(-1)$ and its fillings we recall that a filling of a link exterior
can be described diagrammatically by attaching a symbol in $\Phi$ to each link component.
\begin{figure}
\begin{center}
\includegraphics[width = 7 cm]{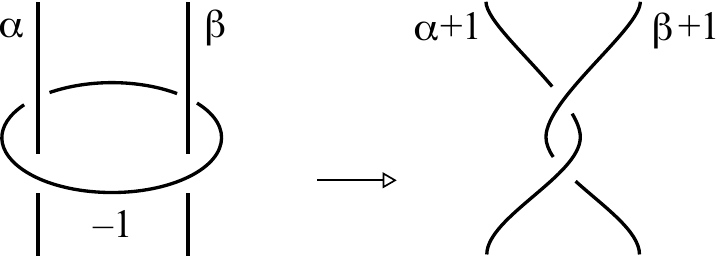}
\end{center}
\caption{The blow-down: a Fenn-Rourke move on surgery diagrams. On the left, the two strands
piercing the spanning disc of the unknot are supposed to be parts of two distinct components of the link.}
\label{Rolfsen:fig}
\end{figure}
Attaching a $-1$ to a component of L10n113
and applying the Fenn-Rourke move described in Fig.~\ref{Rolfsen:fig} (and
called blow-down in the sequel) we see that $M_5(-1)$ is actually the exterior $M_4$ of the
$4$-chain link L8n7 shown above in Fig.~\ref{sequence:fig}.
Using SnapPy one then sees that $M_4$ is a hyperbolic manifold obtained by suitably pairing
the faces of two regular ideal octahedra in $\matH^3$, therefore
it has volume $2\times 3.66386238\ldots = 7.32772475 \ldots$
The blow-down of Fig.~\ref{Rolfsen:fig} also shows that
$$M_5(-1,\alpha_2,\ldots,\alpha_5)=M_4(\alpha_2+1,\alpha_3,\alpha_4,\alpha_5+1)$$
after fixing a cyclic ordering
of the components of L8n7.
More precisely, the slopes on
$\partial M_5(-1)$ represented by $\alpha_2,\alpha_3,\alpha_4,\alpha_5\in\Phi$
using the homology bases coming from L10n113 are the slopes on
$\partial M_4$ represented by $\alpha_2+1,\alpha_3,\alpha_4,\alpha_5+1\in\Phi$
using the bases coming from L8n7. The link L8n7 has an obvious order-$4$
symmetry (a rotation of angle $\frac\pi2$ around an axis orthogonal to the projection plane)
giving a symmetry of $M_4$
that is not induced by a symmetry of $M_5$, and Fig.~\ref{fillings3:fig} shows
how this symmetry can be translated into the map
\begin{align} \label{blow:eqn}
\left(-1, \alpha_2,\alpha_3,\alpha_4,\alpha_5\right) & \longmapsto
\left(-1, \alpha_3-1,\alpha_4,\alpha_5+1,\alpha_2\right)
\end{align}
acting on $\{-1\}\times \Phi^4$ viewed as a subset of the fillings $\Phi^5$ on $M_5$.
One easily sees that (\ref{blow:eqn}) indeed cannot be deduced from (\ref{first:eqn})-(\ref{transposition:eqn}).
We also note that the latter functions map $-1$ to $\frac12$ or $2$, so $15$ different fillings
on $M_5$ give $M_4$.

\begin{figure}
 \begin{center}
\includegraphics[width = 11 cm]{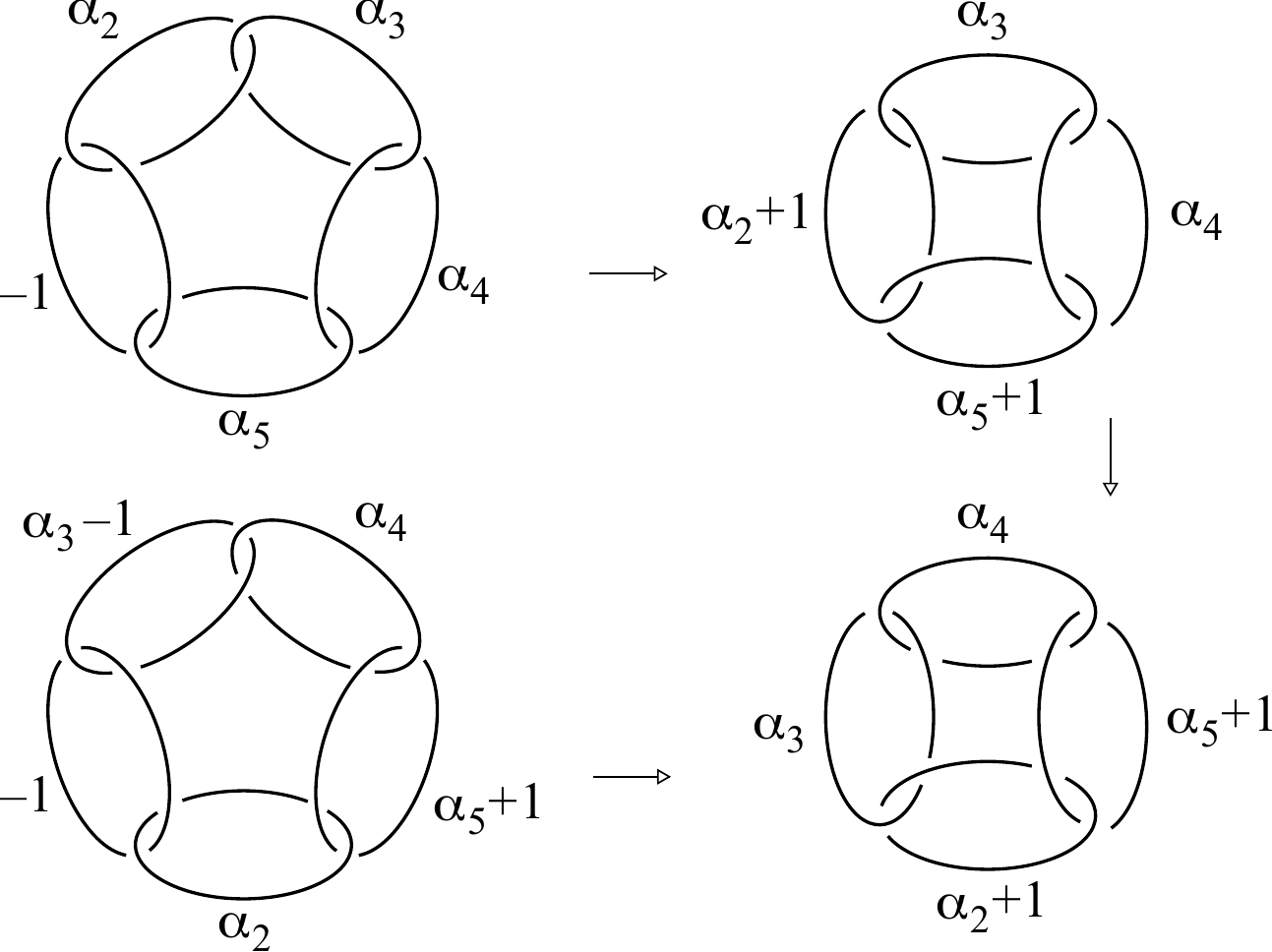}
 \end{center}
 \caption{Two distinct sets of slopes on the second to fifth component of $\partial M_5$ give
 on $M_4=M_5(-1)$ slopes that are obtained from each other by the order-$4$ symmetry of $M_4$.}
 \label{fillings3:fig}
\end{figure}

The rest of the sequence  of link exteriors $\left(M_i\right)_{i=1}^5$
introduced in Fig.~\ref{sequence:fig} is obtained
in a similar fashion, with $M_{i-1}=M_i(-1)$ as shown in Fig.~\ref{fillings2:fig}, whence
$$\begin{array}{ll}
M_4=M_5(-1)& M_3=M_5(-1,-2)\\
M_2=M_5(-1,-2,-2)&M_1=M_5(-1,-2,-2,-2)
\end{array}$$
(though several alternative realizations of $M_i$ as a filling of $M_5$ exist).
Of the links in Fig.~\ref{sequence:fig}, only the figure-eight knot is amphichiral, and its
equivalence with its mirror image induces on the fillings on $M_5$ the partial map
\begin{align}
\label{figure8:eqn}
\left(-1, -2, -2, -2, \alpha_5 \right) & \longmapsto \left(-1, -2, -2, -2, -\alpha_5 - 6 \right)
\end{align}
as one can check with SnapPy or prove using blow-downs as in Fig.~\ref{fillings3:fig}.
\begin{figure}
 \begin{center}
\includegraphics[width = 11.5 cm]{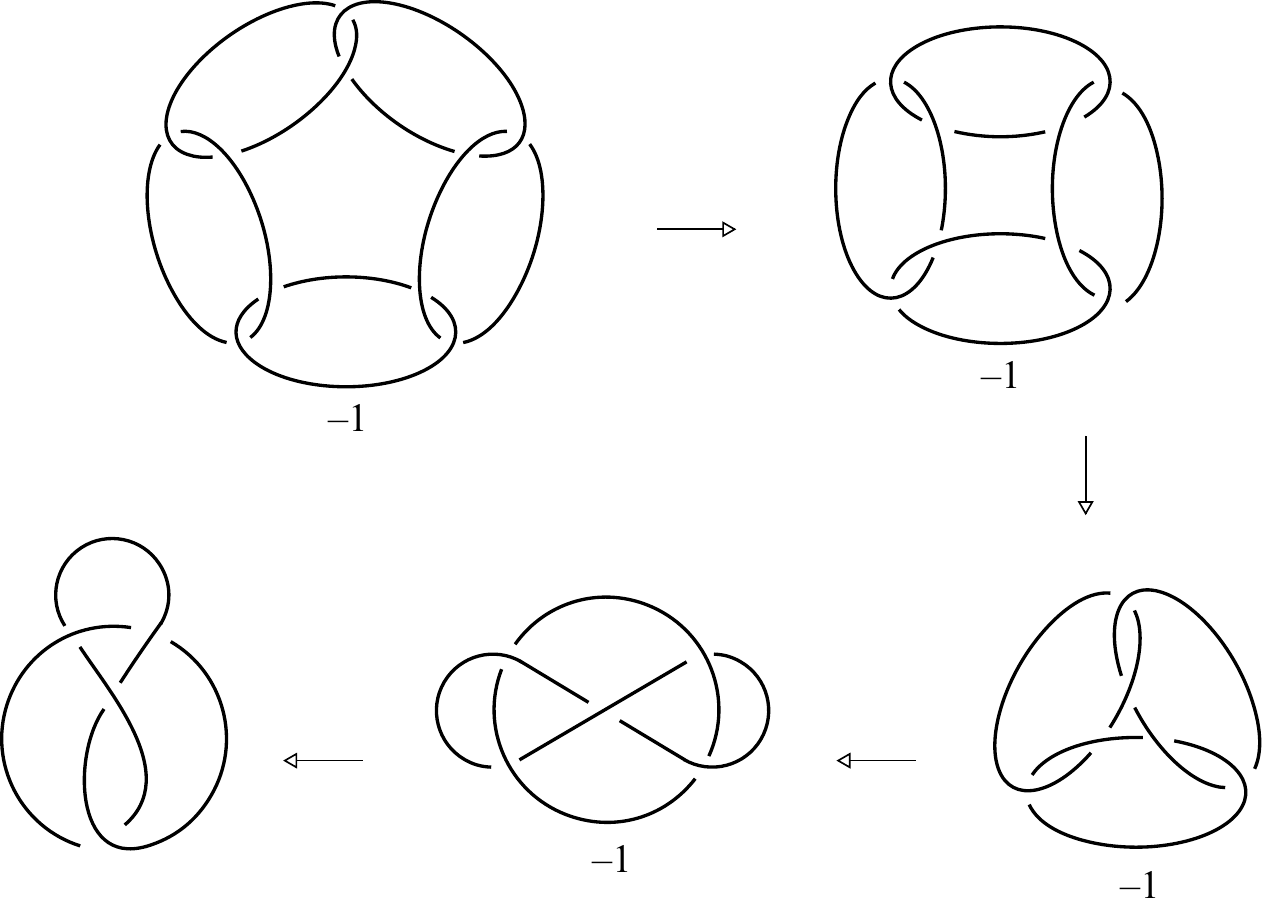}
 \end{center}
 \caption{Each $M_{i-1}$ as a $(-1)$-filling on $M_i$ for $i=2,3,4,5$}
 \label{fillings2:fig}
\end{figure}

We can now get back to the exceptional fillings on $M_5$, showing that for $i=2,3,4,5$ if all the
boundary components of $M_i$ are filled along the slope $-2$, as illustrated in Fig.~\ref{minus2:fig},
then the resulting manifold is non-hyperbolic.
\begin{figure}
 \begin{center}
\includegraphics[width = 12.5 cm]{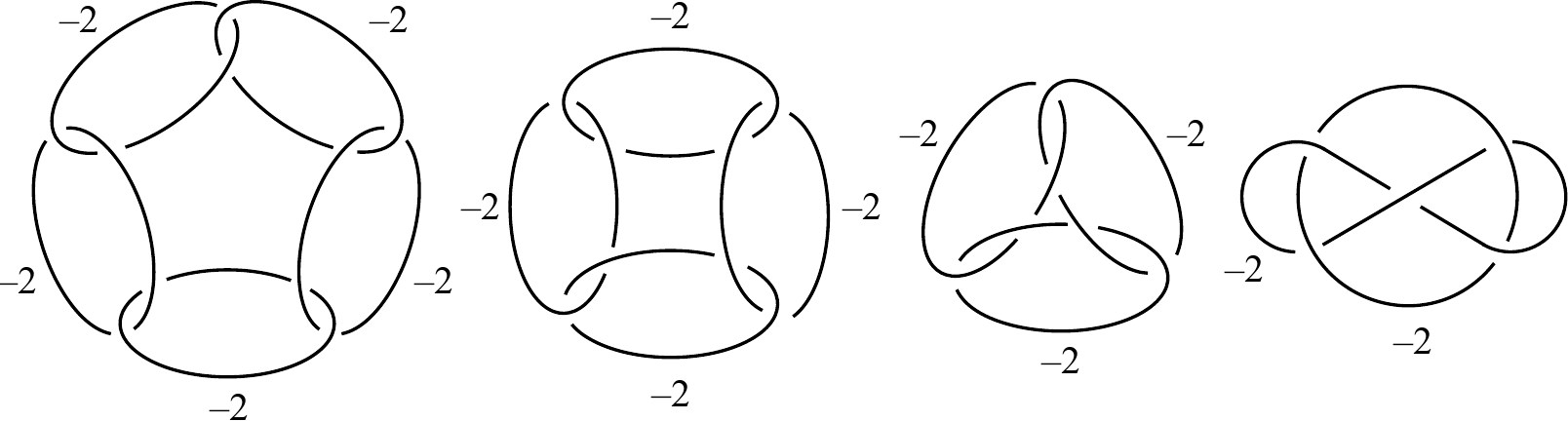}
 \end{center}
 \caption{Four exceptional fillings that cannot be obtained by filling $F$.}
 \label{minus2:fig}
\end{figure}
Indeed, using~\cite{MaPe} for $i=2,3$ and the recognizer~\cite{Rec} for $i=4,5$ we were able to prove that
$$M_i(-2,\ldots,-2)=\seifdue D212{-1} \bigu {-1}i1{-i+1} \seifdue D2131.$$
None of these manifolds can be obtained as a filling of $F=M_5(\infty)$,
since its JSJ decomposition is that apparent in its description,
and the geometric intersection number $i$ on the splitting torus of the
Seifert fibrations of the two JSJ blocks is greater than $1$, whereas it can only be $1$ for a graph-manifold filling of $F$.
Recalling that $M_{i-1}$ was recognized to be $M_i(-1)$ using the blow-down of
Fig.~\ref{Rolfsen:fig} it is now easy to see that the exceptional fillings just described
imply that the following elements of $\Phi^5$ are exceptional for $M_5$:
$$\begin{array}{cc}
(-1, -2, -2, -3, -5) & (-1, -2, -3, -2, -4) \\
(-1, -3, -2, -2, -3) & (-2, -2, -2, -2, -2).
\end{array}$$
We conclude with two more exceptional fillings, as described in
Fig.~\ref{last_exceptional:fig}, one on $M_2$ and one on $M_5$.
Using~\cite{MaPe} and the \emph{Recognizer}, the resulting
manifolds were identified to be respectively
$$M_2(0)=(P\times S^1) \bigb 0110 \qquad  M_5\left(-2, -\tfrac 12, 3, 3, -\tfrac 12\right)=\seifuno{A}{2}{-1} \bigb 1211.$$
Neither of them can be obtained as a filling of $F$, and the latter
cannot be obtained as a filling of the former,
because the geometric intersection numbers of the Seifert fibres on
the non-separating JSJ tori are different. Using the blow-down
of Fig.~\ref{Rolfsen:fig} to identify $M_2(0)$ as a filling of $M_5$, we get
the following further exceptional fillings on $M_5$:
$$(-1, -2, -2, -1) \qquad \left(-2, -\tfrac 12, 3, 3, -\tfrac 12\right).$$
\begin{figure}
 \begin{center}
\includegraphics[width = 8 cm]{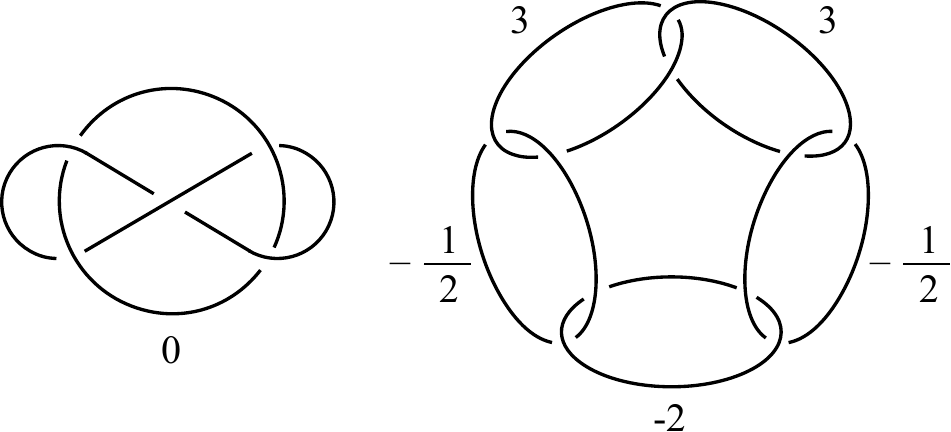}
 \end{center}
 \caption{Two exceptional fillings of $M_5$ not obtained by filling $F$.}
 \label{last_exceptional:fig}
\end{figure}

\subsection{The main theorem}
Quite surprisingly, the $7$ exceptional fillings on $M_5$
just described turn out to be sufficient to describe all other ones up
to the symmetries of $M_1$, $M_4$, and $M_5$:

\begin{teo} \label{main:teo}
Every exceptional filling on $M_5$ is equivalent up to a composition
of the maps \emph{(\ref{first:eqn})-(\ref{figure8:eqn})} to a filling containing one of
$$\begin{array}{rcr}
\infty \quad (-1, -2, -2, -1)  & \left(-2, -\tfrac 12, 3, 3, -\tfrac 12\right) & (-1, -2, -2, -3, -5)\\
(-1, -2, -3, -2, -4) & (-1, -3, -2, -2, -3) & (-2, -2, -2, -2, -2).
\end{array}$$
Moreover, no two of these seven fillings are related to each other by any composition of the
maps \emph{(\ref{first:eqn})-(\ref{figure8:eqn})}.
\end{teo}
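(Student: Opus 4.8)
The plan is to prove the two assertions of Theorem~\ref{main:teo} separately. For the first, the basic reduction is that a filling on $M_5$ is exceptional if and only if it contains an \emph{isolated} exceptional filling: if $\beta$ is exceptional and not itself isolated, some proper sub-filling of $\beta$ is exceptional by definition, and iterating --- the number of non-empty slopes strictly decreasing --- one reaches an isolated exceptional sub-filling $\gamma\subseteq\beta$. So it suffices to enumerate all isolated exceptional fillings on $M_5$ and to find their orbits under the group $\Gamma$ generated by the maps \emph{(\ref{first:eqn})--(\ref{figure8:eqn})}. Once one knows that there are exactly the seven orbits claimed, the first assertion follows: for an arbitrary exceptional $\beta$, pick an isolated exceptional $\gamma\subseteq\beta$, write $\gamma=g(\delta)$ with $g\in\Gamma$ and $\delta$ one of the seven, and note that each of the five maps and its inverse carries sub-fillings to sub-fillings (the empty slope is preserved, and the partial maps \emph{(\ref{blow:eqn})}, \emph{(\ref{figure8:eqn})} respect the coordinate conditions that define their domains, which $\beta$ inherits from $\gamma$), so $g^{-1}(\beta)\supseteq g^{-1}(\gamma)=\delta$.

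The enumeration of isolated exceptional fillings is the computational core, carried out rigorously by the code {\tt find\_exceptional\_fillings.py}. Starting from $M_5$ one runs a tree search: at a node corresponding to a hyperbolic partial filling $M_5(\beta)$ one fixes disjoint embedded horoball cusp neighbourhoods, computes the normalized lengths of the slopes on each still-unfilled cusp, and invokes Moser's template~\cite{Mo} --- which makes SnapPy's numerics rigorous and certifies, in terms of these normalized lengths, that a simultaneous filling of $M_5(\beta)$ along slopes that are long enough on every unfilled cusp is again hyperbolic --- to discard all but the finitely many slopes that are short on some unfilled cusp. The search branches over those short slopes, filling one further cusp at each branch, so it terminates within five levels; at a non-hyperbolic node it records the filling and prunes. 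Since an exceptional completion of a hyperbolic $M_5(\beta)$ must, by Moser's template, involve a slope short on some unfilled cusp, the search reaches every isolated exceptional filling, so the finite list it outputs contains all of them; that it contains nothing spurious is verified by recognizing each recorded filled manifold explicitly --- by hand and with the Recognizer~\cite{Rec}, and using~\cite{MaPe} where applicable --- and checking it is non-hyperbolic (the seven fillings in the statement were already identified with explicit graph manifolds in Section~\ref{notable:subsection}). A final, finite, computation of the $\Gamma$-orbits of this explicit list then exhibits exactly seven of them, represented by $M_5(\infty)$ together with the six sporadic fillings.

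For the second assertion, each generator of $\Gamma$ is induced by a homeomorphism --- \emph{(\ref{first:eqn})--(\ref{transposition:eqn})} by self-homeomorphisms of $M_5$, and \emph{(\ref{blow:eqn})}, \emph{(\ref{figure8:eqn})} by homeomorphisms of the hyperbolic fillings $M_4=M_5(-1)$, resp.\ $M_1=M_5(-1,-2,-2,-2)$ --- so that $\beta'=g(\beta)$ with $g\in\Gamma$ implies $M_5(\beta)\cong M_5(\beta')$. It therefore suffices to check that the seven filled manifolds are pairwise non-homeomorphic, which follows from Section~\ref{notable:subsection}: $M_5(\infty)=F$ has four torus boundary components, $M_5(-1,-2,-2,-1)=M_2(0)$ has exactly one, and the remaining five are closed and are separated by their JSJ decompositions --- the four manifolds $M_i(-2,\ldots,-2)$, $i=2,3,4,5$, by the geometric intersection number $i$ of the two Seifert fibres on the separating JSJ torus, and $M_5(-2,-\tfrac12,3,3,-\tfrac12)=\seifuno{A}{2}{-1}\bigb 1211$ by having a \emph{non-separating} JSJ torus.

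The main obstacle I expect is the rigour and the scale of the enumeration. Conceptually, the delicate point is converting SnapPy's floating-point data --- cusp shapes, maximal cusp neighbourhoods, normalized slope lengths --- into certified statements via Moser's template, so that the slopes not branched on are \emph{provably} hyperbolic fillings and the candidate list is genuinely complete. Practically, one must keep the tree search under control (it terminates, but one needs only manageably many slopes to fail the length bound at each hyperbolic node, and the analysis must be re-run on the many partially filled manifolds $M_5(\beta)$) and then recognize the resulting manifolds; although there are $6237$ isolated exceptional fillings in all, they collapse to so few $\Gamma$-orbits that, once the search is pruned using the symmetries already available, the whole computation runs in a few minutes, as Section~\ref{proof:section} will detail.
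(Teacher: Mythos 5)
Your overall architecture---a certified, pruned tree search producing a complete finite list of candidate isolated exceptional fillings, confirmation of the candidates by explicit recognition (Recognizer, \cite{MaPe}), a finite orbit computation under the maps (\ref{first:eqn})--(\ref{figure8:eqn}), and separation of the seven fillings by homeomorphism invariants of the filled manifolds (number of boundary tori, JSJ data)---is essentially the paper's. The genuine gap is the ingredient you invoke to make the enumeration complete. Moser's template does \emph{not} certify that fillings along long slopes are hyperbolic: it is a Kantorovich-type test showing that a numerical solution of Thurston's gluing equations lies close to an exact one, i.e.\ it certifies hyperbolicity of the specific manifold at a node of the tree for which SnapPy has found a numerical geometric solution. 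The statement that lets one discard all but finitely many slopes at each node is the Agol--Lackenby $6$-theorem: if all filling slopes have length greater than $6$ on embedded disjoint horospherical cusp sections, the filling is hyperbolike, hence hyperbolic by geometrization (the paper observes that the orbifold theorem suffices here, since the involution $\iota$ extends to every filling of $M_5$). Without this theorem---or a quantitative substitute such as the Hodgson--Kerckhoff normalized-length criterion, which is likewise not Moser's result---your assertion that an exceptional completion of a hyperbolic node must use a short slope is unjustified, and the completeness of the candidate list, which is the heart of the proof, fails. Moser's test enters elsewhere: it certifies the hyperbolic structures at the nodes, and its bound on the distance between numerical and exact solutions is what allows a rigorous error estimate on cusp shapes and maximal cusp areas, so that \emph{all} slopes of length at most $6$ are provably enumerated.

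Two smaller points. The paper does not feed $M_5$ directly to the code; it works up the chain $M_2$, $M_3$, $M_4$, then $M_5(-2)$, excluding the slopes already understood ($\infty,0,1$ exceptional; $-1,\tfrac12,2$ giving $M_4$), and it needs Proposition~\ref{generate:prop} and its analogue one level up to know that the listed maps generate the symmetries of $M_4$ used in the earlier classification; your direct orbit computation on the full list would in principle replace this bookkeeping, but your claim that the isolated exceptional fillings form ``exactly seven orbits'' is not literally correct---most isolated exceptional fillings are equivalent only to fillings \emph{strictly containing} one of the seven (e.g.\ containing $\infty$), so the reduction must be phrased as containment up to the maps, exactly as in the statement (your containment argument then still goes through). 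Your proof of the second assertion, via the fact that each generator is induced by a homeomorphism and the seven filled manifolds are pairwise non-homeomorphic, is correct and consistent with the identifications in Section~\ref{notable:subsection}.
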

The exact identification given above of the manifolds obtained by filling $M_5$ along the $7$ tuples of
slopes listed in Theorem~\ref{main:teo} now implies, together with the
discussion in Subsection~\ref{graph:subsection},
that a precise description can be provided
of \emph{all} the manifolds arising as exceptional fillings of $M_5$. We will now spell this out
in the closed case, \emph{i.e.}, for manifolds obtained by filling all 5 cusps of $M_5$, but
the analogous statement for the case where fewer cusps are filled could be easily given:

\begin{cor} \label{main:cor}
Every closed filling of $M_5$ is hyperbolic, except those
listed below and those obtained from them via compositions
of the maps \emph{(\ref{first:eqn})-(\ref{figure8:eqn})}:
\begin{align*}
M_5\left(\infty, \tfrac ab, \tfrac cd, \tfrac ef, \tfrac gh\right) & = \seifdue D a{-b}dc \bigu 0110 \seifdue D feg{-h}\\
M_5\left(-1,-2, -2, -1, \tfrac ab \right) & =  \seifuno{A}{b}{-a-b} \bigb 0110\\
M_5\left(-1, -2, -2, -3, -5\right) & = \seifdue{D}212{-1} \bigu {-1}21{-1} \seifdue{D}2131 \\
M_5\left(-1, -2, -3, -2, -4\right) & = \seifdue{D}212{-1} \bigu {-1}31{-2} \seifdue{D}2131 \\
M_5\left(-1, -3, -2, -2, -3\right) & = \seifdue{D}212{-1} \bigu {-1}41{-3} \seifdue{D}2131 \\
M_5\left(-2, -2, -2, -2, -2\right) & =\seifdue{D}212{-1} \bigu {-1}51{-4} \seifdue{D}2131 \\
M_5\left(-2, -\tfrac 12, 3, 3, -\tfrac 12\right) & = \seifuno A 2{-1} \bigb 1211.
\end{align*}
\end{cor}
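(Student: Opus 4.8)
The plan is to deduce Corollary~\ref{main:cor} from Theorem~\ref{main:teo} by restricting to closed fillings (all five cusps filled) and then identifying the filled manifolds one family at a time. By Theorem~\ref{main:teo}, up to a composition of the maps \emph{(\ref{first:eqn})--(\ref{figure8:eqn})} every closed exceptional filling of $M_5$ is a closed filling that contains one of the seven tuples listed there, and --- since all of \emph{(\ref{first:eqn})--(\ref{figure8:eqn})} preserve the property of having no empty slope --- in the closed case exactly three possibilities remain. If the filling contains $\infty$, then, after moving the $\infty$-slope into the first coordinate by the pure permutations \emph{(\ref{first:eqn})--(\ref{firstbis:eqn})}, it is a closed Dehn filling of $F=M_5(\infty)$. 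If it contains $(-1,-2,-2,-1)$, then it is the filling of the single remaining cusp of the one-cusped manifold $M_5(-1,-2,-2,-1)$. Otherwise it equals one of the five complete tuples $(-2,-\tfrac12,3,3,-\tfrac12)$, $(-1,-2,-2,-3,-5)$, $(-1,-2,-3,-2,-4)$, $(-1,-3,-2,-2,-3)$, $(-2,-2,-2,-2,-2)$, and the corresponding filled manifolds --- the last five displayed equalities of the statement --- were already identified in Subsection~\ref{notable:subsection}, either directly (for the last two) or by combining $M_i(-2,\dots,-2)=\seifdue D212{-1}\bigu{-1}i1{-i+1}\seifdue D2131$ with the blow-down identity $M_5(-1,\alpha_2,\alpha_3,\alpha_4,\alpha_5)=M_4(\alpha_2+1,\alpha_3,\alpha_4,\alpha_5+1)$ and its iterates; so this case needs nothing new. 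In the opposite direction, each manifold appearing on a right-hand side below is a (possibly degenerate) graph manifold, hence non-hyperbolic, and every Dehn filling of $F$, or of $M_5(-1,-2,-2,-1)=(P\times S^1)\bigb 0110$, is again of this kind; so the displayed list, closed up under \emph{(\ref{first:eqn})--(\ref{figure8:eqn})}, is precisely the set of non-hyperbolic closed fillings.

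There thus remain only the first two displayed equalities. For the first, recall from the discussion around Fig.~\ref{open_chain:fig} that $M_5(\infty)$ is the exterior of the open $4$-chain and equals the four-cusped graph manifold $F=(P\times S^1)\bigu 0110(P\times S^1)$, whose boundary consists of the two free boundary tori of each pair-of-pants block. The only calculation to carry out is the comparison, torus by torus, of the meridian--longitude basis $(\mu_i,\lambda_i)$ coming from L10n113 with its first component deleted against the $(\partial P,\,S^1)$-basis built into the notation of Subsection~\ref{graph:subsection}. Carrying the standard diagram of the open chain into a position in which both $P\times S^1$ blocks are visible --- or simply reading the peripheral curves off SnapPy --- one finds that each of the four changes of basis has one of two standard shapes (interchange the two generators, or keep the section and reverse the orientation of the fibre), distributed over the four tori precisely so that filling the $i$-th torus along $p_i\mu_i+q_i\lambda_i$ becomes, by the definitions of Subsection~\ref{graph:subsection}, the asserted
$$M_5\!\left(\infty,\tfrac ab,\tfrac cd,\tfrac ef,\tfrac gh\right)=\seifdue D a{-b}dc\bigu 0110\seifdue D feg{-h}.$$

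For the second equality, iterate the blow-down identity above together with the analogous identities $M_i(-1,\dots)=M_{i-1}(\dots)$ read off Fig.~\ref{fillings2:fig} --- at each step the blown-down $(-1)$-framed component raises by $1$ the coefficients of its two neighbours in the chain --- to compute that $M_5(-1,-2,-2,-1,\tfrac ab)=M_2(0,\tfrac ab+3)$ and, in particular, that $M_5(-1,-2,-2,-1)$ equals $M_2(0)=(P\times S^1)\bigb 0110$ with one cusp left free. Tracking the meridian--longitude basis on that cusp through the blow-downs and through the product structure of $P\times S^1$, the filling along $\tfrac ab$ becomes a Dehn filling of the pair-of-pants block producing the annulus block $\seifuno A b{-a-b}$, whence
$$M_5\!\left(-1,-2,-2,-1,\tfrac ab\right)=\seifuno A b{-a-b}\bigb 0110,$$
which is the remaining displayed equality.

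The main obstacle is exactly this homology-basis bookkeeping in the two parametrized families: keeping the L10n113 meridian--longitude conventions aligned with the fibre/section conventions of Subsection~\ref{graph:subsection} through the identifications of Figs.~\ref{open_chain:fig} and~\ref{last_exceptional:fig} and through the iterated blow-downs of Fig.~\ref{Rolfsen:fig}; everything else is either a direct appeal to Theorem~\ref{main:teo} or a finite recognition already performed in Subsection~\ref{notable:subsection}. As a consistency check one can specialize the first formula to the slopes that make the filling coincide, up to \emph{(\ref{first:eqn})--(\ref{figure8:eqn})}, with one of the five complete seeds and compare it with the descriptions just recalled; and one can confirm a handful of instances of both parametrized formulas directly with SnapPy or the Recognizer.
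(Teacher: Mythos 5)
Your proposal is correct and follows essentially the same route as the paper: restrict Theorem~\ref{main:teo} to closed fillings, note that only the two parametrized families require identification (the five rigid seeds having been recognized in Subsection~\ref{notable:subsection}), read the first family off the open-chain picture of Fig.~\ref{open_chain:fig}, and reduce the second family via blow-downs to fillings of $M_2(0)$ --- where the paper simply quotes the formula from~\cite{MaPe} instead of re-tracking the bases, which amounts to the same bookkeeping. The only difference is that you spell out the (trivial but implicit) converse direction that fillings of the graph manifolds $F$ and $M_5(-1,-2,-2,-1)$ are non-hyperbolic, which the paper leaves unsaid.
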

\begin{proof}
We only need to check the correct expression for the filled manifolds on first two lines,
depending on the filling parameters. The expression on the first line is easily derived from
Fig.~\ref{open_chain:fig}, and that on the second line is taken from~\cite{MaPe}.
\end{proof}

\section{The code}
We have written a python code, named
\begin{center}
{\tt find\_exceptional\_fillings.py}
\end{center}
available from~\cite{M} and based on the
SnapPy libraries written by Culler, Dunfield, and Weeks~\cite{SnapPy}.
The code takes as an input a manifold $N$ and gives as an output a list of candidate
isolated exceptional fillings of $N$. It uses the \emph{hikmot} python 
library \cite{HIKMOT} and contains a certain \emph{ad hoc} rewriting of some SnapPea routines.

We explain in this section how and why the code works; the reader who is interested only in its implementation may skip this section and read the next one.

\subsection{The algorithm}
The code follows a standard iterative algorithm,
already used in~\cite{MaPe}, that we briefly recall here.

For every cusp $T$ of $N$, the code uses SnapPy to determine the cusp shape $x+iy$ of $T$,
in the sense that up to dilation $T$ is the quotient of $\matC$ under the
lattice generated by $1$ and by $x+iy$.
The code also finds the area $A$ of $T$ in a maximal horospherical cusp section, where maximality is
meant in the sense that all cusps of $N$ are simultaneously expanded at equal volume until
they first cease to be embedded and disjoint. The code then
enumerates the finitely many slopes on $T$ having length at most 6.
Note that a slope expressed by $\frac pq$ with respect to the basis of $H_1(T)$
corresponding to the generators $1$ and $x+iy$ of the lattice giving $T$ as a quotient has length
\begin{equation} \label{pq:eqn}
\ell\left(\tfrac pq\right) = \sqrt{\tfrac Ay\big((p+xq)^2+(yq)^2\big)}.
\end{equation}
For every slope $s$ in this finite list, the code checks whether the filled
manifold $N(s)$ is hyperbolic or not. If it is, the algorithm proceeds
iteratively with $N(s)$. If it is not, it appends the slope $s$ to the
list of isolated exceptional fillings on $N$.
Every isolated exceptional filling on $N$ is guaranteed to be detected by this algorithm
thanks to the Agol-Lackenby ``6-theorem''~\cite{Ago6,L}, according to which a
filling on $N$ with all slopes all having length
greater than 6 is \emph{hyperbolike}, and hence hyperbolic thanks to geometrization.
(Perelman's general proof of the geometrization conjecture
is not strictly necessary if $N$ is a hyperbolic filling of $M_5$,
because the symmetry $\iota$ of $M_5$ acts on each cusp as the elliptic involution
and hence extends to every filling of $M_5$, therefore the orbifold theorem~\cite{Bo-Po}
ensures geometrization for all the fillings of $M_5$.)

The process of listing the slopes of length at most $6$ is of course
sensitive to any numerical approximation in the computation
of the cusp shape $x+iy$ and the area $A$. To ensure rigor,
our code computes these values up to some (very) small error (see below), thus guaranteeing that \emph{all} slopes of length at most $6$ are picked. Occasionally some slope of length more than $6$ might be included, causing  a little redundant
analysis but not affecting the accuracy of the proof.
It is also important to note that the program only produces a list of \emph{candidate} isolated exceptional
fillings, that one needs to verify by hand. More precisely, the code produces \emph{two} lists of
fillings of $N$, and each list requires some kind of \emph{a posteriori} confirmation. To describe these
lists we need to briefly recall how SnapPy constructs hyperbolic structures on 3-manifolds.

\subsection{Thurston's hyperbolicity equations}
We describe here the classical general strategy used to construct hyperbolic structures on 3-manifolds. Given a cusped manifold $N$ and some filling $s$ on it,
Thurston's method~\cite{Thu} to construct a hyperbolic structure on $N(s)$ consists in
taking one complex variable $z_i$ for each
tetrahedron in an ideal triangulation
of $N$, and trying to solve certain holomorphic equations in these variables.
The number of equations can be reduced to be equal to the number of variables,
and a hyperbolic structure on $N(s)$ is guaranteed in presence of
a \emph{geometric solution}, namely one
with $\Im(z_i)>0$ for all $i$. If it exists, a geometric solution is unique.
A solution with some zero or negative $\Im(z_i)$ corresponds (respectively) to some
tetrahedra becoming \emph{flat} or
\emph{negatively oriented}, and the manifold $N(s)$ is not guaranteed to be hyperbolic if such a
solution is found. Experimentally, if there are only a few tetrahedra having $\Im(z_i) \leqslant 0$
(for instance, if there is only one), then often $N(s)$ has a hyperbolic structure anyway. Sometimes
a geometric solution for the same manifold can be found by randomly modifying the triangulation.
Sometimes hyperbolicity can be established by passing to some finite cover, because if some cover of a manifold
is hyperbolic then the manifold also is. We have written~\cite{M} a short code named {\tt search\_geometric\_solutions.py}
that tries to guarantee hyperbolicity using these techniques if a geometric solution is
not found in the first place.

\subsection{Interval arithmetic}
SnapPy uses Newton's algorithm to find an approximate numerical solution of Thurston's hyperbolicity equations.
This algorithm is very efficient, but a numerical solution does not rigorously guarantee the
existence of a nearby exact solution. This annoying problem was recently solved in \cite{HIKMOT}: the authors wrote a python library called {\tt hikmot}, freely available from \cite{HIKMOT-web}, that contains the function {\tt verify\_hyperbolicity()}. The function takes a SnapPy triangulation as an input and tries to provide a rigorous geometric solution of Thurston's equations using the Krawczyk Test. If it succeeds, the function returns a geometric solution expressed via \emph{interval arithmetic}: in interval arithmetic every real number is replaced
by a small interval $[a,b]$ containing it, whose width $|a-b|$ is the unavoidable error. Various functions
and operations easily extend from real numbers to intervals, for instance $[a,b] + [c,d] = [a+c, b+d]$, so that error propagation is automatically kept under control.
The function {\tt verify\_hyperbolicity()} is of course not guaranteed \emph{a priori} to find a geometric solution, but when it does it provides as an output some variables $z_i$ (actually, small rectangles in the upper half-plane)
that are guaranteed to contain a geometric solution ---in particular, if
{\tt verify\_hyperbolicity()} returns a solution then the 
existence of a hyperbolic structure is rigorously guaranteed.

As explained above, to make the proof rigorous it is vital to list \emph{all} the slopes
having length at most $6$ in a system of embedded and disjoint horospherical
cusp sections. Including a few longer slopes is not a problem, but none of length
at most $6$ must be missed. The length of a slope depends on the computation
of the cusp shape $x+iy$ and area $A$, that
SnapPy's original routines determine
numerically without keeping track of error propagation.
We were then forced to rewrite these routines in python, using interval arithmetic:
the new routines produce intervals $A$, $x$, and $y$ 
that are guaranteed to contain the true values of $A$, $x$, and $y$. 
The length $\ell(\frac pq)$  
of the slope $\frac pq$ is then also an interval
$\big[\ell_1(\frac pq), \ell_2(\frac pq)\big]$, found using the interval-arithmetic analogue of (\ref{pq:eqn}).
Selecting all the slopes $\frac pq$ such that
$\ell_1(\frac pq) \leqslant 6$ we then obtained a 
finite list rigorously 
guaranteed to contain all possibly exceptional slopes.

\subsection{Cusp area and shape}
Our routines that calculate $A$, $x$, and $y$ differ at some points from those used by SnapPea and we thus describe them. The first step is the construction of some (probably non-maximal) horospherical section of each cusp.
Recall that the hyperbolic manifold is described
by an ideal triangulation and by a geometric solution $(z_i)$ of Thurston's equations. Every $z_i$ is actually a small rectangle, but in what follows we will manipulate it as if it were a complex number, since all the operations we will use have their counterpart in interval arithmetic.

The ideal triangulation induces a triangulation of each toric boundary component.
On each complete (\emph{i.e.},~unfilled) boundary component the code picks an arbitrary oriented
edge and assigns to it the complex length 1.
The code then uses the $z_i$'s to extend the complex length
assignment to all the oriented edges of the torus. By construction, the complex lengths of one edge with opposite orientations are
opposite to each other, and the sums of the complex edge-lengths along the
boundary of each oriented triangle is $0$.
These complex lengths fully describe the Euclidean
structure of the torus up to similarity. In addition,
the real length of an edge is just the modulus of its complex length, therefore Heron's formula can be used to
compute the area of each triangle and hence of the full Euclidean torus.
We know that the complex lengths describe
an embedded horospherical cusp section provided that
the area is small enough, and for
theoretical reasons~\cite{SnapPy} the universal upper bound $\frac38\sqrt 3$ on the area of each cusp
is sufficient to ensure that all the cusp sections are embedded and
disjoint. To get the desired horospherical section the code then
rescales the Euclidean structure on each torus (which amounts to
multiplying the complex length of every edge on the torus by a certain positive constant)
until the area is $\frac38\sqrt 3$.

Having found the complex lengths giving the Euclidean structure on the torus,
the code can now compute the cusp shape $x+iy$ with respect to the fixed
meridian-longitude homology basis. To do this, following SnapPea, the code
describes two curves representing the meridian and the longitude as
normal curves with respect to the triangulation of the toric cusp, and
it computes the corresponding complex translation lengths
by developing the triangulation along the curve.
The complex shape $x+iy$ is then the ratio of the translation lengths
of the longitude and of the meridian.

We now turn to the computation of the area $A$ of a maximal horospherical cusp section.
As we will explain soon, this is best carried out when the triangulation of the
manifold is not an arbitrary one, but the canonical Epstein-Penner~\cite{EP}
Euclidean decomposition, or a subdivision of it, that our code  then
asks SnapPy to find. The triangulation that SnapPy gives as an answer is not strictly
guaranteed to be the canonical one because SnapPy's computations are not rigorous, but we can take this into account.
In very sporadic cases SnapPy is unable to canonize
the triangulation, and in these situations we just set $A = \frac38\sqrt 3$,
which is always fine because the existence of embedded disjoint cusp sections each having this area is guaranteed on any hyperbolic
manifold. In all other cases, the code works with a triangulation which is very likely to be the canonical one, and,
as detailed above, it starts by constructing a
section of area $\frac38\sqrt 3$ at each cusp.
These cusp sections lift to infinitely many horoballs in $\mathbb H^3$, and, if the triangulation is canonical, the minimal distance
between two  distinct such horoballs is realized along some edge of the ideal triangulation.
For each edge $e$ of the triangulation the code then computes the distance $d(e)$ between
the horoballs centered at the ends of $e$, to do which it picks one
ideal tetrahedron of which $e$ is an edge, and it employs a nice formula to be found in the SnapPea kernel~\cite{SnapPy}.
Supposing the tetrahedron has vertices $0,1,2,3$ and denoting by
$e(p,q)$ the edge with ends $p$ and $q$, and by $w(r;p,q)$ the boundary edge
on the link of $r$ with ends on $e(r,p)$ and $e(r,q)$, the formula reads
$$ d(e(p,q)) = -\frac 12 \log(L(w(p;r,q))\cdot L(w(p;s,q)) \cdot L(w(q;r,p))\cdot L(w(q;s,p)) )$$
where $\{p,q,r,s\}=\{0,1,2,3\}$ and $L$ denotes the real length of a boundary edge with respect to the Euclidean structure already found.
Taking the minimum $d$ of $d(e)$ over all the edges $e$ of the triangulation,
one theoretically knows that by
rescaling all the complex edge lengths by a factor $e^{\frac d2}$
the new minimal $d$ is then $0$, and,
if the triangulation is indeed canonical, this gives a maximal horospherical cusp
section with all cusps of area $A = e^d\cdot \frac38\sqrt 3$.

However, we are not 100\% sure that the triangulation is canonical.
To deal with this issue we use the fact that, for any triangulation, to guarantee that embedded and
disjoint horospherical cusp sections have been found, besides checking the condition $d>0$,
one must make sure that
for every ideal tetrahedron $T$ and at every vertex $v$ of $T$ the three complex
lengths of the boundary edges of the link of
$v$ determine a triangle entirely contained in $T$. To do so, we
represent $T$ in the upper half-space model of $\matH^3$ as in Fig.~\ref{height:fig}, with $v$ at $\infty$.
\begin{figure}
\begin{center}
\includegraphics[width = 12.5cm]{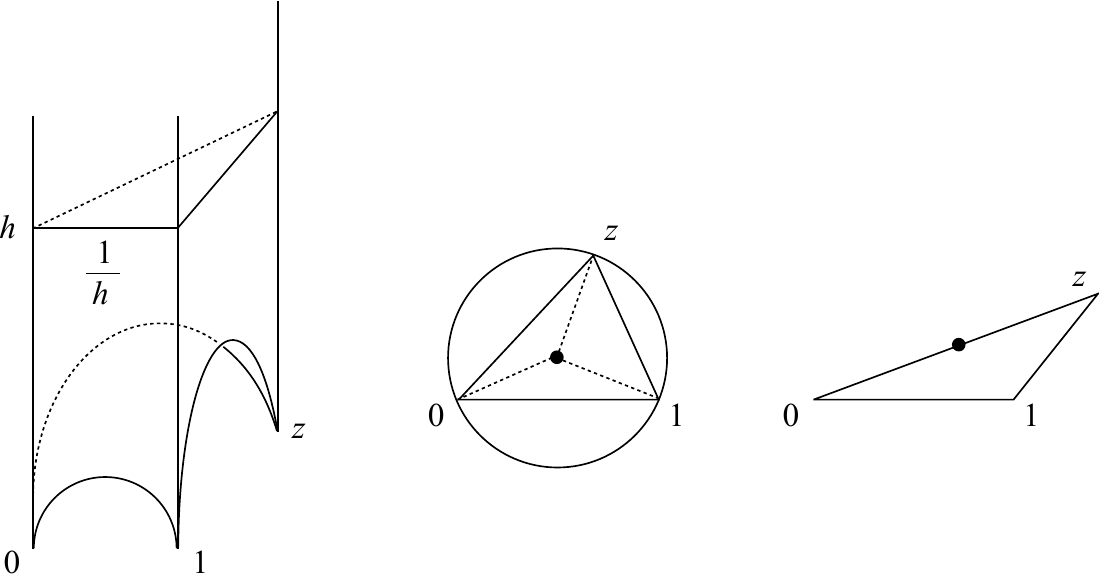}
 \end{center}
 \caption{The horospherical section is a triangle
 if its height $h$ is greater than some $k$. If all the inner angles of the triangle with vertices $0,1,z$ are
 acute, then $k$ is the radius of the circumcircle of the triangle, otherwise $k$ is
 half the length of the longest edge of the triangle.}
 \label{height:fig}
\end{figure}
The Euclidean height $h$ of the cusp section at $\infty$
is then easily seen to be the inverse of the length of the boundary
edge whose vertices lie
above $0$ and $1$, and the horizontal plane at height $h$ intersects $T$ in a triangle if and only if $h$ is greater than
some number $k$ that we now explain how to determine. Recall that $T$ is the intersection of four half-spaces, three
bounded by vertical planes and one
bounded by the hyperbolic plane whose circle at infinity $C$ contains $0, 1$, and $z$. If the center of $C$ lies in the triangle
of vertices $0$, $1$, and $z$ then $k$ equals the radius of $C$, otherwise $k$ equals
$\max\left\{\tfrac 12, \tfrac {|z|}2, \tfrac {|z-1|}2\right\}$.
Our code makes sure that embedded and disjoint horospherical cusp sections have been found
by checking that $h>k$ for every choice of $T$ and $v$, taking
errors into account. The code is actually designed to return $A = \frac38\sqrt 3$ as above
in case some test $h>k$ fails, but as a matter of fact this never happened
during our computations (which is not surprising, since the
triangulation $T$ is extremely likely to be the canonical one or a subdivision of it).

\section{Proof of the main theorem} \label{proof:section}
In this section we describe our proof of
Theorem~\ref{main:teo}, that, as anticipated above, is computer-assisted but rigorous.
Our main tool is the python code {\tt find\_exceptional\_fillings.py} described in the previous section and publicly available from \cite{M}.

\subsection{The output}
The code {\tt find\_exceptional\_fillings.py} takes a cusped hyperbolic manifold $N$
as an input and produces two lists as an output:
\begin{enumerate}[(I)]
\item A list of candidate exceptional fillings of $N$, for which SnapPy was unable to find any kind of solution;
\item A list of candidate hyperbolic fillings of $N$, for which SnapPy found some numerical non-geometric solution
(with flat or negatively oriented tetrahedra);
\end{enumerate}

Typically, list (II) consists of closed manifolds only. If this is the case,
the classification of the exceptional fillings of $N$ then
becomes complete and rigorous provided we can \emph{a posteriori} confirm the following:
\begin{enumerate}[(i)]
\item All the manifolds in list (I) are non-hyperbolic;
\item All the manifolds in list (II) are hyperbolic.
\end{enumerate}

On the contrary, when list (II) contains some manifold $Y$ with boundary tori, (ii) is necessary but not sufficient, because we should separately test for hyperbolicity the fillings of $Y$.  However, for all the $N$'s we had to deal with in the proof of Theorem \ref{main:teo}, list (II) indeed contained closed manifolds only.

\subsection{Organizing the search}
Despite our main objective being that to establish
Theorem~\ref{main:teo}, we have not run our code directly with $M_5$ as an input,
because it turned out that considerable computer time could be saved by dealing
first with the manifolds $M_2$, $M_3$ (thereby confirming the results of~\cite{MaPe}), and $M_4$, and
only then with $M_5$. In the next pages, along with the description of the data obtained step after step,
we will give an explanation of the exact strategy we have used to employ the data obtained at one step
to make the next step simpler.

\subsection{The Whitehead link exterior}
Running the code for $M_2$ (the exterior of the Whitehead link),
we get for this manifold the lists (I) and (II) described above.
With a slope $\frac pq\neq\emptyset$ written as \texttt{(p,q)},
the empty slope written as \texttt{(0,0)}, and pairs of slopes
(used to fill the two boundary components of $M_2$) written
between square brackets, the output is as follows:

\begin{verbatim}
Candidate exceptional fillings:
With 1 fillings:
[[(0,0),(0,1)],[(0,0),(1,0)],[(0,0),(1,1)],[(0,0),(2,1)],
[(0,0),(3,1)],[(0,0),(4,1)],[(0,1),(0,0)],[(1,0),(0,0)],
[(1,1),(0,0)],[(2,1),(0,0)],[(3,1),(0,0)],[(4,1),(0,0)]]
Total: 12
With 2 fillings:
[[(-4,1),(-1,1)],[(-3,1),(-1,1)],[(-2,1),(-2,1)],
[(-2,1),(-1,1)],[(-1,1),(-4,1)],[(-1,1),(-3,1)],
[(-1,1),(-2,1)],[(-1,1),(-1,1)],[(3,2),(5,1)],
[(4,3),(5,1)],[(5,1),(3,2)],[(5,1),(4,3)],
[(5,2),(7,2)],[(7,2),(5,2)]]
Total: 14
Candidate hyperbolic fillings:
With 1 fillings:
[]
Total: 0
With 2 fillings:
[[(-3,1),(-2,1)],[(-2,1),(-3,1)],[(3,2),(6,1)],
[(5,2),(6,1)],[(6,1),(3,2)],[(6,1),(5,2)]]
Total: 6
\end{verbatim}

Getting back to our tasks, we first
simplify (i) ---the confirmation that the candidate exceptional fillings are indeed non-hyperbolic---
by noting that there is a symmetry of $M_2$ that switches the two boundary components, with trivial action
on slopes given by
\begin{align}
(\alpha_1, \alpha_2) & \mapsto (\alpha_2, \alpha_1).\label{W0:eqn}
\end{align}
Using (\ref{W0:eqn}) we see that to achieve (i)
we must only show that the fillings
$$0\quad 1\quad 2\quad 3\quad 4\quad \infty$$
on one component, and
$$\begin{array}{cccc}
(-2, -2) & (-1, -4) & (-1, -3) & (-1, -2)\\
(-1, -1) & \left( \tfrac 43, 5 \right)& \left( \tfrac 32, 5\right) & \left( \tfrac 52, \tfrac 72 \right)\alza
\end{array}$$
on both components are truly exceptional. Exceptionality of these fillings was already proved in~\cite{MaPe}, and
we can actually make the list shorter using the additional symmetries
\begin{align}
\left(-1, \alpha_2\right) & \mapsto \left(-1, -\alpha_2\right) \label{W1:eqn} \\
\left(5, \alpha_2\right) & \mapsto \left(5, \tfrac {\alpha_2}{\alpha_2-1}\right) \label{W2:eqn}
\end{align}
that follow from the amphichirality of $M_2(-1)$ and $M_2(5)$ ---the figure-eight knot exterior and its sibling--- and
are detected by SnapPy.
Under the action of these maps the set of exceptional fillings on $M_2$ reduces to
$$0\quad 1\quad 2\quad 3\quad 4\quad \infty\quad (-2,-2)\quad \left(\tfrac 52, \tfrac 72\right), $$
as summarized in Table~\ref{exceptional_2:table}.
Getting to task (ii) ---the confirmation that the candidate hyperbolic fillings are indeed hyperbolic---
up to (\ref{W0:eqn}) we must prove hyperbolicity only of
$$(-2, -3)\quad \left(\tfrac 32, 6\right)\quad \left( \tfrac 52, 6 \right), $$
which we can do using {\tt search\_geometric\_solution}. In all three cases the code
finds a geometric solution on some finite cover of degree at most $10$. Note
that $M_2\left(\tfrac32,6\right)$ is the closed hyperbolic manifold Vol3 analyzed in~\cite{JR},
for which no geometric solution is known.
We have thus rigorously proved the following:
\begin{teo} \label{Whitehead:teo}
Every exceptional Dehn filling on $M_2$ is equivalent up to a composition of the maps
\emph{(\ref{W0:eqn})-(\ref{W2:eqn})}
to a filling containing one of
$$0\quad 1\quad 2\quad 3\quad 4\quad \infty\quad (-2,-2)\quad \left(\tfrac 52, \tfrac 72\right).$$
Moreover, no two of these eight fillings are related to each other by any composition of the
maps \emph{(\ref{W0:eqn})-(\ref{W2:eqn})}.
\end{teo}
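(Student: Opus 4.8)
The plan is to feed $M_2$ to the code {\tt find\_exceptional\_fillings.py} and then to discharge the three \emph{a posteriori} obligations (i)--(iii) on the resulting lists (I), (II), (III) displayed above. Exhaustiveness of the search rests on the Agol--Lackenby $6$-theorem \cite{Ago6,L}: a filling of $M_2$ whose non-empty slopes all have length greater than $6$ in the maximal cusp section is hyperbolike, hence hyperbolic by geometrization, so a non-hyperbolic filling must have a slope of length at most $6$ in $M_2$ itself; filling that slope yields a one-cusped manifold on which the same reasoning applies, so any isolated exceptional filling is reached by the code along a chain of short-slope hyperbolic fillings and appears in one of the three lists (the length bounds being rigorous thanks to the error-propagation bookkeeping described above). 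Once task (ii) certifies that every entry of list (II) is hyperbolic and closed, and task (iii) certifies --- via Moser's template \cite{Mo}, as already run --- every geometric solution of list (III), the non-hyperbolic isolated fillings of $M_2$ are precisely the entries of list (I) that task (i) certifies to be non-hyperbolic.

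For task (i), the component-swap symmetry (\ref{W0:eqn}), which is trivial on slopes, reduces the $12$ one-cusp entries of list (I) to the six fillings $0,1,2,3,4,\infty$ on a single component, and the $14$ two-cusp entries to $(-2,-2)$, $(-1,-1)$, $(-2,-1)$, $(-3,-1)$, $(-4,-1)$, $\left(\tfrac32,5\right)$, $\left(\tfrac43,5\right)$, $\left(\tfrac52,\tfrac72\right)$; non-hyperbolicity of all of these is proved in \cite{MaPe}. Bringing in (\ref{W1:eqn}) and (\ref{W2:eqn}), which come from the amphichirality of the figure-eight knot exterior $M_2(-1)$ and of its sibling $M_2(5)$ and are detected by SnapPy \cite{SnapPy}, the map (\ref{W1:eqn}) carries $(-1,-k)$ to $(-1,k)$ for $k=1,2,3,4$ and (\ref{W2:eqn}) carries $\left(5,\tfrac43\right)$ and $\left(5,\tfrac32\right)$ to $(5,4)$ and $(5,3)$; each target filling contains an exceptional one-cusp filling, so those six two-cusp classes are absorbed, and the list of isolated exceptional fillings collapses to exactly $0,1,2,3,4,\infty,(-2,-2),\left(\tfrac52,\tfrac72\right)$.

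For task (ii), using (\ref{W0:eqn}) list (II) reduces to the three fillings $(-2,-3)$, $\left(\tfrac32,6\right)$, $\left(\tfrac52,6\right)$, all of which fill both cusps and are therefore closed; running {\tt search\_geometric\_solutions.py} yields for each a geometric solution on a finite cover of degree at most $10$, certifying hyperbolicity, and no further fillings require examination. Combining (i)--(iii), every isolated exceptional filling of $M_2$ is one of the eight displayed up to (\ref{W0:eqn})--(\ref{W2:eqn}); and since a filling with an exceptional proper sub-filling is itself exceptional (the pathological exception never occurring for $M_2$), every exceptional filling of $M_2$ contains an isolated exceptional one, which --- as the maps fix the empty slope and hence respect the sub-filling relation --- means it is equivalent up to (\ref{W0:eqn})--(\ref{W2:eqn}) to a filling containing one of the eight. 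For the final assertion, the maps preserve the number of filled cusps (they fix $\emptyset$), so a one-cusp filling is never equivalent to a two-cusp one; among $0,1,2,3,4,\infty$ only (\ref{W0:eqn}) applies and it merely moves a slope to the other component, so these six are pairwise inequivalent; and neither $(-2,-2)$ nor $\left(\tfrac52,\tfrac72\right)$ has a coordinate equal to $-1$ or $5$, so (\ref{W1:eqn}) and (\ref{W2:eqn}) do not act on them while (\ref{W0:eqn}) only exchanges the entries of $\left(\tfrac52,\tfrac72\right)$; hence the eight fillings fall into eight distinct orbits.

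The step I expect to be the genuine bottleneck is task (i): SnapPy's failure to find a solution is only heuristic evidence of non-hyperbolicity, so one must independently recognize the candidate manifolds, and it is the identification of these manifolds as graph manifolds or small Seifert spaces --- here imported wholesale from \cite{MaPe}, but requiring genuine work by hand and with the Recognizer in the later sections treating $M_3$, $M_4$, and $M_5$ --- that is the substantive content; the $M_2$ case is essentially a rehearsal of the method.
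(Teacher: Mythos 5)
Your proposal is correct and follows essentially the same route as the paper: run {\tt find\_exceptional\_fillings.py} on $M_2$, reduce list (I) via (\ref{W0:eqn}), import exceptionality from \cite{MaPe}, absorb the $(-1,\cdot)$ and $(5,\cdot)$ classes via (\ref{W1:eqn})--(\ref{W2:eqn}), certify the three list-(II) candidates hyperbolic via geometric solutions on covers of degree at most $10$, and confirm list (III) with Moser's test. Your explicit checks of the $6$-theorem exhaustiveness, the sub-filling reduction, and the pairwise inequivalence of the eight fillings only spell out what the paper treats in its general code discussion and in the remark following the theorem.
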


\begin{rem}
The last assertion of the previous theorem is readily proved by direct inspection,
and we will refrain from stating the analogous ones in the
subsequent Theorems~\ref{magic:teo} and~\ref{M4:teo}.
\end{rem}

\subsection{The magic manifold} \label{M3:subsection}
After the Whitehead link exterior, the next item in the sequence $\left(M_i\right)_{i=1}^5$
introduced in Subsection~\ref{notable:subsection} is the magic manifold $M_3$.
The exceptional fillings on $M_3$ were already classified in~\cite{MaPe}, and using our code we can
rigorously confirm this classification along the lines explained above.
Since $M_2$ is obtained by a $(-1)$-filling on any component of $M_3$
and the exceptional fillings on $M_2$ are those in Theorem~\ref{Whitehead:teo},
we can exclude from our search any filling on $M_3$ containing a $-1$.
In the code {\tt find\_exceptional\_fillings.py}
there is a list named {\tt exclude} that by default is empty and that for $M_3$ we set as
\begin{verbatim}
exclude = [[(-1,1),(0,0),(0,0)],[(0,0),(-1,1),(0,0)],
[(0,0),(0,0),(-1,1)]]
\end{verbatim}
then getting the following as an output:
\begin{verbatim}
Candidate exceptional fillings:
With 1 fillings:
[[(0,0),(0,0),(0,1)],[(0,0),(0,0),(1,0)],[(0,0),(0,0),(1,1)],
[(0,0),(0,0),(2,1)],[(0,0),(0,0),(3,1)],[(0,0),(0,1),(0,0)],
[(0,0),(1,0),(0,0)],[(0,0),(1,1),(0,0)],[(0,0),(2,1),(0,0)],
[(0,0),(3,1),(0,0)],[(0,1),(0,0),(0,0)],[(1,0),(0,0),(0,0)],
[(1,1),(0,0),(0,0)],[(2,1),(0,0),(0,0)],[(3,1),(0,0),(0,0)]]
Total: 15
With 2 fillings:
[[(0,0),(1,2),(4,1)],[(0,0),(3,2),(5,2)],[(0,0),(4,1),(1,2)],
[(0,0),(5,2),(3,2)],[(1,2),(0,0),(4,1)],[(1,2),(4,1),(0,0)],
[(3,2),(0,0),(5,2)],[(3,2),(5,2),(0,0)],[(4,1),(0,0),(1,2)],
[(4,1),(1,2),(0,0)],[(5,2),(0,0),(3,2)],[(5,2),(3,2),(0,0)]]
Total: 12
With 3 fillings:
[[(-2,1),(-2,1),(-2,1)],[(1,2),(5,1),(5,1)],
[(2,3),(4,1),(4,1)],[(3,2),(3,2),(4,1)],[(3,2),(3,2),(8,3)],
[(3,2),(4,1),(3,2)],[(3,2),(7,3),(7,3)],[(3,2),(8,3),(3,2)],
[(4,1),(2,3),(4,1)],[(4,1),(3,2),(3,2)], [(4,1),(4,1),(2,3)],
[(4,3),(5,2),(5,2)],[(5,1),(1,2),(5,1)],[(5,1),(5,1),(1,2)],
[(5,2),(4,3),(5,2)],[(5,2),(5,2),(4,3)],[(5,2),(5,3),(5,3)],
[(5,3),(5,2),(5,3)],[(5,3),(5,3),(5,2)],[(7,3),(3,2),(7,3)],
[(7,3),(7,3),(3,2)],[(8,3),(3,2),(3,2)]]
Total: 22
Candidate hyperbolic fillings:
With 1 fillings:
[]
Total: 0
With 2 fillings:
[]
Total: 0
With 3 fillings:
[[(-3,1),(-2,1),(-2,1)],[(-3,1),(-2,1),(4,1)],
[(-2,1),(-3,1),(-2,1)],[(-2,1),(-2,1),(-3,1)],
[(-2,1),(1,2),(3,2)],[(2,3),(4,1),(-2,1)],
[(4,3),(3,2),(4,1)],[(4,3),(4,1),(3,2)],
[(5,1),(3,2),(3,2)],[(5,3),(3,2),(4,1)],
[(5,3),(4,1),(3,2)],[(7,3),(3,2),(3,2)],
[(7,3),(3,2),(5,3)]]
Total: 13
\end{verbatim}

To proceed we note that any element of the permutation group $\permu_3$ on the cusps of $M_3$
is realized by an isometry of $M_3$, with action on the slopes generated by two transpositions:
\begin{align}
(\alpha_1, \alpha_2, \alpha_3) & \mapsto (\alpha_3, \alpha_2, \alpha_1) \label{magic1:eqn} \\
(\alpha_1, \alpha_2, \alpha_3) & \mapsto (\alpha_2, \alpha_1, \alpha_3) \label{magic2:eqn}
\end{align}
In addition, as already stated in~\cite{MaPe}, a few fillings of $M_3$ have additional symmetries,
inducing the following partial maps on the slopes:
\begin{align}
\left(\tfrac 12, \alpha_2,\alpha_3\right) & \mapsto \left(\tfrac 12, 4-\alpha_2,4-\alpha_3\right) \label{M3_begin:eqn}\\
\left(\tfrac 32, \alpha_2,\alpha_3\right) & \mapsto \left(\tfrac 32, \tfrac {2\alpha_2 - 5}{\alpha_2 - 2},
    \tfrac {2\alpha_3 - 5}{\alpha_3 - 2}\right) \label{M3_a:eqn}\\
\left(\tfrac 52, \alpha_2,\alpha_3\right) & \mapsto \left(\tfrac 52, \tfrac {\alpha_2 - 3}{\alpha_2 - 2},
    \tfrac {2\alpha_3 - 3}{\alpha_3 - 1}\right)  \label{M3_b:eqn}\\
\left(4, \alpha_2,\alpha_3\right) & \mapsto \left(4, \tfrac {\alpha_2 - 2}{\alpha_2 - 1},
    \tfrac {\alpha_3 - 2}{\alpha_3 - 1} \right) \label{M3_c:eqn}\\
\left(-1,-2,\alpha_3\right) & \mapsto \left(-1, -2, -\alpha_3-2\right) \label{M3_1:eqn} \\
\left(-1, 4,\alpha_3\right) & \mapsto \left(-1, 4, \tfrac{1}{\alpha_3}\right). \label{M3_2:eqn}
\end{align}

Note that (\ref{M3_1:eqn}) and (\ref{M3_2:eqn}) follow from (\ref{W1:eqn}) and (\ref{W2:eqn}) because
$M_3(-1,\alpha_2,\alpha_3)=M_2(\alpha_2+1,\alpha_3+1)$, whereas (\ref{W2:eqn})
induces at the level of $M_3$ a map that is obtained (in a complicated fashion) by the
other maps listed.
We now embed the achievement of tasks (i)-(iii) for $M_3$ in the proof of
the following result, that confirms the main statement of~\cite{MaPe}
---with an overall change of sign because the chain link considered there is the mirror image
of the one considered here:

\begin{teo}\label{magic:teo}
Every exceptional Dehn filling on $M_3$ is equivalent up to a composition of the maps
\emph{(\ref{magic1:eqn})-(\ref{M3_2:eqn})}
to a filling containing one of
$$0\quad1\quad2\quad3\quad\infty\quad(-1,-1)\quad(-1, -3, -3)\quad(-2, -2, -2) \quad \left(\tfrac 23, 4, 4\right).$$
\end{teo}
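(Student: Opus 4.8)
The plan is to treat $N=M_3$ by exactly the scheme already rehearsed for $M_2$: the output of {\tt find\_exceptional\_fillings.py} on $M_3$ (with the three $-1$-slopes put into {\tt exclude}) has been displayed above, and it remains to discharge tasks (i), (ii), (iii) for these lists (I), (II), (III), and then to account separately for the fillings containing a $-1$ by invoking Theorem~\ref{Whitehead:teo}. Task (iii) is handled verbatim as for $M_2$: one re-runs the code with {\tt print\_data\_for\_Moser} set to {\tt True} and feeds list (III) to Moser's Pari template; since every numerical geometric solution passes the Kantorovich test, list (III) is rigorously certified, so every filling of $M_3$ not appearing in lists (I) or (II) is hyperbolic.

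For task (i), the first step is to quotient the candidate exceptional fillings in list (I) by the $\permu_3$-action generated by (\ref{magic1:eqn})--(\ref{magic2:eqn}), and then by the partial maps (\ref{M3_begin:eqn})--(\ref{M3_2:eqn}) coming from the extra symmetries of $M_3(\tfrac12)$, $M_3(\tfrac32)$, $M_3(\tfrac52)$, $M_3(4)$, $M_3(-1,-2)$, and $M_3(-1,4)$. A finite orbit computation reduces list (I) to the representatives $0$, $1$, $2$, $3$, $\infty$, $(-1,-3,-3)$, $(-2,-2,-2)$, and $\left(\tfrac23,4,4\right)$. For each representative one then exhibits $M_3$ filled along it as a graph manifold, Seifert fibered space, connected sum of lens spaces, or the like, in the notation of Subsection~\ref{graph:subsection}, reading these identifications directly off~\cite{MaPe} (modulo the global sign change, since the chain link there is the mirror of the one here); none of the resulting manifolds is hyperbolic, which completes (i).

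For task (ii), note first that every filling in list (II) fills all three cusps, so the ``they are closed'' half of (ii) is automatic and no further recursion on sub-fillings is needed. Up to (\ref{magic1:eqn})--(\ref{magic2:eqn}) only a short list of the $16$ candidates survives, and for each one {\tt search\_geometric\_solutions.py} produces a geometric solution — directly, after a random retriangulation, or on a small finite cover — thereby proving hyperbolicity. It then remains to reinstate the $-1$-fillings: a filling of $M_3$ carrying a $-1$ on, say, the first cusp is $M_3(-1,\alpha_2,\alpha_3)=M_2(\alpha_2+1,\alpha_3+1)$, which is exceptional precisely when $(\alpha_2+1,\alpha_3+1)$ is exceptional on $M_2$. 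Using Theorem~\ref{Whitehead:teo}, together with the fact that (\ref{W1:eqn})--(\ref{W2:eqn}) pull back through $M_3(-1,\alpha_2,\alpha_3)=M_2(\alpha_2+1,\alpha_3+1)$ to maps expressible via $\permu_3$ and (\ref{M3_1:eqn})--(\ref{M3_2:eqn}), every such filling is equivalent to one containing $0$, the tuple $(-1,-1)$, or a member of the list above; the only genuinely new representative contributed this way is $(-1,-1)$. Assembling the three pieces shows that every exceptional filling on $M_3$ is equivalent, under a composition of (\ref{magic1:eqn})--(\ref{M3_2:eqn}), to one containing one of the nine stated slopes or tuples.

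\textbf{Main obstacle.} The conceptual input is slight: the manifold identifications are already in~\cite{MaPe}, hyperbolicity of the list-(II) fillings is delegated to {\tt search\_geometric\_solutions.py}, and rigor rests on the Moser certification of list (III) and the $6$-theorem. The delicate part is the finite but intricate bookkeeping of the symmetry reduction — verifying that the candidate exceptional fillings in list (I), enlarged by the $-1$-family inherited from $M_2$, collapse to exactly the nine advertised orbits under the group generated by (\ref{magic1:eqn})--(\ref{M3_2:eqn}); that each partial map is applied only on the locus where it is defined; and, as flagged in the text just after (\ref{M3_2:eqn}), that the reduction of the $-1$-family genuinely uses only maps at our disposal, given that (\ref{W2:eqn}) does not lift to $M_3$ as a single formula but is recovered from the other listed maps. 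This orbit bookkeeping, rather than any geometric step, is where care is required.
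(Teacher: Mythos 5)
Your proposal follows essentially the same path as the paper: run the code on $M_3$ with the three $-1$-slopes excluded, discharge tasks (i)--(iii), and reinstate the $-1$-family via the blow-down identity $M_3(-1,\alpha_2,\alpha_3)=M_2(\alpha_2+1,\alpha_3+1)$ and Theorem~\ref{Whitehead:teo}. The overall structure and the final list of nine representatives agree with the paper.

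There is, however, a bookkeeping error in the symmetry reduction that swaps two of the nine representatives between your two sources. You claim that list (I) alone reduces under (\ref{magic1:eqn})--(\ref{M3_2:eqn}) to $0,1,2,3,\infty,(-1,-3,-3),(-2,-2,-2),\left(\tfrac23,4,4\right)$, and that the $-1$-family then contributes only $(-1,-1)$ as new. This is backwards. No filling in list (I) can be equivalent under the given maps to a filling containing $(-1,-3,-3)$: the maps (\ref{M3_begin:eqn})--(\ref{M3_c:eqn}) fix a slot at $\tfrac12,\tfrac32,\tfrac52,4$ respectively, (\ref{M3_1:eqn}) and (\ref{M3_2:eqn}) fix slots at $-1,-2$ and $-1,4$, and producing a $-3$ from the formulas would require a slope such as $7,\tfrac{11}{5},\tfrac94,\tfrac65,\tfrac54$, none of which ever arises from the slopes appearing in list (I). By contrast, several 3-slope tuples in list (I) do reduce to fillings containing $(-1,-1)$ (for example $\left(\tfrac12,5,5\right)\mapsto\left(\tfrac12,-1,-1\right)$ under (\ref{M3_begin:eqn}), and likewise $\left(\tfrac32,\tfrac32,4\right)$, $\left(\tfrac32,\tfrac73,\tfrac73\right)$, $\left(\tfrac43,\tfrac52,\tfrac52\right)$). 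So list (I) contributes $(-1,-1)$ but not $(-1,-3,-3)$; the representative $(-1,-3,-3)$ is contributed only by the $-1$-family, as the pullback of $(-2,-2)$ on $M_2$. Since the union of the two contributions is still exactly the nine tuples in the statement, the final conclusion is unaffected, but as written the intermediate claim about list (I) is false and the ``only genuinely new representative is $(-1,-1)$'' sentence should read $(-1,-3,-3)$ instead.
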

\begin{proof}
All the candidate exceptional fillings on $M_3$ contained in the list (I) produced by our code
for $M_3$ had already been precisely recognized and hence proved to be indeed exceptional in~\cite{MaPe}, which
completes task (i) for $M_3$. Letting the maps (\ref{magic1:eqn})-(\ref{magic2:eqn}) act we reduce the list to
$$\begin{array}{ccccc}
0 & 1 & 2 & 3 & \infty\\
\left(\tfrac 12, 4\right) &
    \left( \tfrac 32, \tfrac 52 \right) &
        (-2, -2, -2) &
            \left(\tfrac 12, 5, 5\right) &
                \left( \tfrac 23, 4, 4 \right)\alza\\
\left( \tfrac 43, \tfrac 52, \tfrac 52 \right) &
    \left( \tfrac 32, \tfrac 73, \tfrac 73 \right) &
        \left( \tfrac 32, \tfrac 32, \tfrac 83 \right) &
            \left( \tfrac 32, \tfrac 32, 4 \right) &
                \left(  \tfrac 53, \tfrac 53, \tfrac 52 \right)\alza
\end{array}$$
to which we add
$$\begin{array}{cccc}
(-1, -1) & (-1, -3, -3) & (-1, -2, -5) & (-1, -2, -4)\\ (-1, -2, -3) & (-1, -2, -2) & \left( -1, \tfrac 13, 4 \right)
\end{array}$$
coming from the isolated exceptional fillings on $M_2 = M_3(-1)$ found in Theorem~\ref{Whitehead:teo},
under the identification $M_3(-1,\alpha_2,\alpha_3)=M_2(\alpha_2+1,\alpha_3+1)$ given by
the blow-down move of Fig.~\ref{Rolfsen:fig}. This is in complete agreement with~\cite{MaPe}, and
letting all the maps (\ref{M3_begin:eqn})-(\ref{M3_2:eqn}) act
we now get the list of $9$ exceptional fillings in the statement.

Turning to task (ii), the code {\tt search\_geometric\_solutions.py} succeeds in finding a geometric solution for each
candidate hyperbolic filling of list (II) for $M_3$.
\end{proof}

\subsection{The 4-chain link exterior} \label{M4:subsection}
Our next aim is to classify the exceptional fillings on the exterior
$M_4$ of the chain link with 4 components shown in Fig.~\ref{sequence:fig}.
The symmetry group of this link is isomorphic to $D_4\times \matZ/\!_2$, with the factor $\matZ/\!_2$ generated by an
involution similar to the $\iota$ described in Fig.~\ref{pentangle:fig}, with trivial action on fillings, and
two generators of the dihedral group $D_4$ act on fillings as follows:
\begin{align}
(\alpha_1, \alpha_2,\alpha_3,\alpha_4) & \longmapsto (\alpha_4, \alpha_1, \alpha_2, \alpha_3) \label{M4_begin:eqn} \\
(\alpha_1, \alpha_2, \alpha_3, \alpha_4) & \longmapsto (\alpha_4, \alpha_3, \alpha_2, \alpha_1). \label{M4_second:eqn}
\end{align}
In addition to these symmetries coming from the link, the symmetry group of $M_4$
contains a $\matZ/\!_2\times \matZ/\!_2$
subgroup leaving each cusp invariant, with two generators
acting as follows on slopes:
\begin{align}
\left(\alpha_1,\alpha_2,\alpha_3,\alpha_4\right) & \longmapsto
    \left(\tfrac{\alpha_1-2}{\alpha_1-1},\tfrac{\alpha_2-2}{\alpha_2-1},
    \tfrac{\alpha_3-2}{\alpha_3-1},\tfrac{\alpha_4-2}{\alpha_4-1}\right)\label{M4_third:eqn}\\
\left(\alpha_1,\alpha_2,\alpha_3,\alpha_4\right) & \longmapsto
    \left(2-\alpha_1, \tfrac{\alpha_2}{\alpha_2-1},2-\alpha_3,\tfrac{\alpha_4}{\alpha_4-1}\right).\label{M4_end:eqn}
\end{align}

Before running our code on $M_4$ we can now exclude the filling $-1$, that gives $M_3$, and
all the fillings obtained from $-1$ under compositions of the maps (\ref{M4_begin:eqn})-(\ref{M4_end:eqn}), which
give $-1$, $\frac32$, $3$, $\frac12$  on each cusp, for a total of $16$ slopes.
In {\tt find\_exceptional\_fillings.py} we then modify the list {\tt exclude} to
\begin{verbatim}
exclude = [
[(-1,1),(0,0),(0,0),(0,0)],[(3,2),(0,0),(0,0),(0,0)],
[(3,1),(0,0),(0,0),(0,0)],[(1,2),(0,0),(0,0),(0,0)],
[(0,0),(0,0),(0,0),(-1,1)],[(0,0),(0,0),(0,0),(3,2)],
[(0,0),(0,0),(0,0),(3,1)],[(0,0),(0,0),(0,0),(1,2)],
[(0,0),(-1,1),(0,0),(0,0)],[(0,0),(3,2),(0,0),(0,0)],
[(0,0),(3,1),(0,0),(0,0)],[(0,0),(1,2),(0,0),(0,0)],
[(0,0),(0,0),(-1,1),(0,0)],[(0,0),(0,0),(3,2),(0,0)],
[(0,0),(0,0),(3,1),(0,0)],[(0,0),(0,0),(1,2),(0,0)]]
\end{verbatim}
getting the following output:
\begin{verbatim}
Candidate exceptional fillings:
With 1 fillings:
[[(0,0),(0,0),(0,0),(0,1)],[(0,0),(0,0),(0,0),(1,0)],
[(0,0),(0,0),(0,0),(1,1)],[(0,0),(0,0),(0,0),(2,1)],
[(0,0),(0,0),(0,1),(0,0)],[(0,0),(0,0),(1,0),(0,0)],
[(0,0),(0,0),(1,1),(0,0)],[(0,0),(0,0),(2,1),(0,0)],
[(0,0),(0,1),(0,0),(0,0)],[(0,0),(1,0),(0,0),(0,0)],
[(0,0),(1,1),(0,0),(0,0)],[(0,0),(2,1),(0,0),(0,0)],
[(0,1),(0,0),(0,0),(0,0)],[(1,0),(0,0),(0,0),(0,0)],
[(1,1),(0,0),(0,0),(0,0)],[(2,1),(0,0),(0,0),(0,0)]]
Total: 16
With 2 fillings:
[]
Total: 0
With 3 fillings:
[]
Total: 0
With 4 fillings:
[[(-2,1),(-2,1),(-2,1),(-2,1)],[(2,3),(4,1),(3,4),(4,1)],
[(4,1),(2,3),(4,1),(2,3)],[(4,3),(4,3),(4,3),(4,3)]]
Total: 4
Candidate hyperbolic fillings:
With 1 fillings:
[]
Total: 0
With 2 fillings:
[]
Total: 0
With 3 fillings:
[]
Total: 0
With 4 fillings:
[[(2,3),(-2,1),(2,3),(4,1)],[(2,3),(4,1),(2,3),(5,1)],
[(2,3),(4,1),(3,4),(4,1)],[(3,4),(4,1),(2,3),(4,1)],
[(4,1),(-2,1),(-2,1),(-2,1)],[(4,1),(2,3),(-2,1),(2,3)],
[(4,1),(2,3),(4,1),(3,4)],[(4,1),(3,4),(4,1),(2,3)],
[(4,1),(4,3),(4,1),(2,3)],[(4,3),(4,3),(4,3),(5,4)],
[(4,3),(4,3),(5,4),(4,3)],[(5,4),(4,3),(4,3),(4,3)]]
Total: 12
\end{verbatim}
The list (I) of candidate exceptional fillings is then very short, and up to the action of
(\ref{M4_begin:eqn})-(\ref{M4_end:eqn}) it actually reduces to
$$0\quad \infty\quad (-2, -2, -2, -2)$$
which are indeed all exceptional: we have $M_4(\infty)=M_5(-1,\infty)$ and $M_4(0)=M_5(-1,\emptyset,0)$,
and we know that $\infty$ and $0$ are exceptional for $M_5$; moreover the
exceptionality of $(-2, -2, -2, -2)$ was proved using the \emph{Recognizer}~\cite{Rec}, as
discussed in Subsection~\ref{notable:subsection}. To state our classification result for the exceptional fillings of $M_4$ we now
note that under the identification
$$M_4(-1,\alpha_2,\alpha_3,\alpha_4) = M_3(\alpha_2+1,\alpha_3,\alpha_4+1)$$
the maps (\ref{magic2:eqn}) and (\ref{M3_1:eqn}) induce for $M_4$ respectively
\begin{align}
\left(-1, \alpha_2, \alpha_3, \alpha_4\right) & \mapsto \left( -1, \alpha_3-1, \alpha_2+1, \alpha_4 \right) \label{M4_1:eqn} \\
\left(-1, -2, -2, \alpha_4\right) & \mapsto \left( -1, -2, -2, -\alpha_4-4 \right). \label{M4_2:eqn}
\end{align}

\begin{prop} \label{generate:prop}
Under the correspondence
$$(\alpha_1,\alpha_2,\alpha_3)\leftrightsquigarrow(-1,\alpha_1-1,\alpha_2,\alpha_3-1)$$
the action of the maps \emph{(\ref{magic1:eqn})}-\emph{(\ref{M3_2:eqn})}
on triples is generated by the action of \emph{(\ref{M4_begin:eqn})}-\emph{(\ref{M4_2:eqn})} on $4$-tuples.
\end{prop}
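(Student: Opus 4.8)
The plan is to verify the claim directly, after transporting everything through the correspondence. Write $\iota\colon\Phi^3\to\{-1\}\times\Phi^3\subseteq\Phi^4$ for the injection $(\alpha_1,\alpha_2,\alpha_3)\mapsto(-1,\alpha_1-1,\alpha_2,\alpha_3-1)$, so that $M_3(\alpha_1,\alpha_2,\alpha_3)=M_4\big(\iota(\alpha_1,\alpha_2,\alpha_3)\big)$ by the blow-down of Fig.~\ref{Rolfsen:fig}. Each of the maps \emph{(\ref{magic1:eqn})-(\ref{M3_2:eqn})} is carried by $\iota$ to a (possibly partial) self-map of $\{-1\}\times\Phi^3$; note that $\iota$ is not slope-preserving (it shifts the first and third coordinates of a triple by $\mp 1$), so the transported maps are conjugates of the originals and one should not expect their formulas to coincide on the nose. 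What must be shown is that these transported maps generate the same group of transformations of $\{-1\}\times\Phi^3$ as do the restrictions to $\{-1\}\times\Phi^3$ of those compositions of \emph{(\ref{M4_begin:eqn})-(\ref{M4_2:eqn})} that preserve this subset. Two of the eight generators on the left are already handled: as noted in Subsection~\ref{M3:subsection} and in the paragraph preceding Proposition~\ref{generate:prop}, \emph{(\ref{magic2:eqn})} is carried by $\iota$ to \emph{(\ref{M4_1:eqn})} and \emph{(\ref{M3_1:eqn})} to \emph{(\ref{M4_2:eqn})}, and both of the latter preserve $\{-1\}\times\Phi^3$.

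Next I would dispose of the coordinate-permutation generator \emph{(\ref{magic1:eqn})}. The $D_4$-symmetry of the $4$-chain link fixing two opposite components acts on $\Phi^4$ by $(\alpha_1,\alpha_2,\alpha_3,\alpha_4)\mapsto(\alpha_1,\alpha_4,\alpha_3,\alpha_2)$ — a composition of \emph{(\ref{M4_begin:eqn})} and \emph{(\ref{M4_second:eqn})} — and this fixes $-1$ in the first slot and interchanges the second and fourth; applying $\iota^{-1}$ one sees at once that it induces $(\alpha_1,\alpha_2,\alpha_3)\mapsto(\alpha_3,\alpha_2,\alpha_1)$, i.e.\ \emph{(\ref{magic1:eqn})}. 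Together with \emph{(\ref{magic2:eqn})} this yields the whole symmetric group $\permu_3$ of coordinate permutations of triples. There remain the five ``special-slope'' generators \emph{(\ref{M3_begin:eqn})}, \emph{(\ref{M3_a:eqn})}, \emph{(\ref{M3_b:eqn})}, \emph{(\ref{M3_c:eqn})}, \emph{(\ref{M3_2:eqn})}, which are the projective-linear symmetries of the hyperbolic fillings $M_3(\tfrac12)$, $M_3(\tfrac32)$, $M_3(\tfrac52)$, $M_3(4)$, $M_3(-1,4)$. For each I would exhibit an explicit composition of \emph{(\ref{M4_begin:eqn})-(\ref{M4_1:eqn})} that preserves the appropriate sub-locus of $\{-1\}\times\Phi^3$ and that, via $\iota$, induces the given map up to a coordinate permutation of the triple (harmless, $\permu_3$ being available). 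The words come from a uniform recipe: a $D_4$-permutation moves the ``special'' slope into the first coordinate, one then applies \emph{(\ref{M4_third:eqn})} (and if necessary \emph{(\ref{M4_end:eqn})}), and a final $D_4$-permutation restores the slice; the resulting self-map is checked by substituting the parametrization $(-1,\alpha_1-1,\alpha_2,\alpha_3-1)$ and simplifying the composite Möbius transformations. For instance, for \emph{(\ref{M3_b:eqn})} one verifies that \emph{(\ref{M4_third:eqn})} post-composed with the $D_4$-element interchanging the two pairs of opposite components induces, on the locus $\{-1\}\times\{\tfrac32\}\times\Phi^2$ and up to the transposition of the last two triple-coordinates, the square of \emph{(\ref{M3_b:eqn})}; in particular \emph{(\ref{M3_b:eqn})} has order $3$, which is precisely how an order-$3$ transformation of triples can arise from the maps \emph{(\ref{M4_begin:eqn})-(\ref{M4_end:eqn})}, each of which has order at most $4$.

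For the reverse inclusion I would use that \emph{(\ref{M4_begin:eqn})-(\ref{M4_end:eqn})} generate a \emph{finite} group of transformations of $\Phi^4$, namely the image of the symmetry group of $M_4$ acting through its quotient by the cusp-wise elliptic involution. Hence a composition of \emph{(\ref{M4_begin:eqn})-(\ref{M4_2:eqn})} preserving $\{-1\}\times\Phi^3$ either lies in the stabilizer of $\{-1\}\times\Phi^3$ inside that finite group, or involves one of the partial maps \emph{(\ref{M4_1:eqn})}, \emph{(\ref{M4_2:eqn})}; either way there are only finitely many possibilities for its effect on a generic triple, and running through them and matching each against the generators found above shows that every such composition restricts, via $\iota$, to an element of the group generated by \emph{(\ref{magic1:eqn})-(\ref{M3_2:eqn})}.

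I expect the main obstacle to be the bookkeeping in the second paragraph: composing Möbius transformations across the coordinate permutations is routine but error-prone, and one must take care that each word genuinely stabilizes the whole slice $\{-1\}\times\Phi^3$ (or the relevant sub-locus) rather than merely carrying one special point to another, since \emph{(\ref{M4_1:eqn})} and \emph{(\ref{M4_2:eqn})} are defined only on proper subsets. All of these identities can in addition be confirmed numerically with SnapPy, which is how the maps \emph{(\ref{M3_begin:eqn})-(\ref{M3_2:eqn})} and \emph{(\ref{M4_third:eqn})-(\ref{M4_end:eqn})} were found in the first place.
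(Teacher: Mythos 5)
Your route for the direction that actually constitutes the statement (each of (\ref{magic1:eqn})--(\ref{M3_2:eqn}) is induced, through the correspondence, by a word in (\ref{M4_begin:eqn})--(\ref{M4_2:eqn})) is the paper's: (\ref{magic1:eqn}) from the dihedral maps, (\ref{magic2:eqn})$\,=\,$(\ref{M4_1:eqn}), (\ref{M3_1:eqn})$\,=\,$(\ref{M4_2:eqn}), and the special-slope symmetries realized by conjugating the cusp-preserving symmetries (\ref{M4_third:eqn}), (\ref{M4_end:eqn}) with permutations so as to leave the slice $\{-1\}\times\Phi^4$ and return to it; your sample computation for (\ref{M3_b:eqn}) (that the one-pass word gives its square up to a transposition, and that it has order $3$) agrees with the paper's. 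The genuine gap is that your ``uniform recipe'' --- one permutation, one application of (\ref{M4_third:eqn}) (possibly composed with (\ref{M4_end:eqn})), one permutation back --- cannot produce (\ref{M3_c:eqn}) or (\ref{M3_2:eqn}). On the locus of (\ref{M3_c:eqn}) the special slope $4$ appears in the $4$-tuple either as $3$ in an even slot or as $4$ in the third slot, and to come back to the slice the word must send one of the constant entries to $-1$ (the generic coordinates cannot do it). Checking the three nontrivial coordinatewise maps, the only possibility is the composite (\ref{M4_more:eqn}) of (\ref{M4_third:eqn}) and (\ref{M4_end:eqn}) acting on the value $3$; but that composite sends the original $-1$ to $\tfrac12$, so after any $D_4$-permutation restoring the slice the induced map on triples carries $\{4\}\times\Phi^2$ onto the $\tfrac32$-locus --- it is the auxiliary map (\ref{varphi:map}) of the paper, not (\ref{M3_c:eqn}) up to a coordinate permutation, and no one-pass word is. The missing idea is precisely the paper's two-pass step: use this locus-moving auxiliary map to conjugate (\ref{M3_a:eqn}) into (\ref{M3_c:eqn}), and compose it with (\ref{M3_a:eqn}) and (\ref{M3_b:eqn}) to obtain (\ref{M3_2:eqn}).

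Two smaller points. First, the proposition asserts only the one inclusion above (which is all that the proof of Theorem~\ref{M4:teo} uses), so your third paragraph's ``reverse inclusion'' is not required and is not proved in the paper; as sketched it would in any case need more care, since (\ref{M4_1:eqn}) and (\ref{M4_2:eqn}) are only partial maps and the compositions you would have to enumerate do not form a group. Second, in your (\ref{M3_b:eqn}) check the $D_4$-element must be the reflection $(\alpha_1,\alpha_2,\alpha_3,\alpha_4)\mapsto(\alpha_2,\alpha_1,\alpha_4,\alpha_3)$ (or the rotation used in the paper), not the rotation by two, which fails to return the tuple to the slice $\{-1\}\times\Phi^3$; with that reading your computation is correct.
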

\begin{proof}
It is very easy to see that (\ref{magic1:eqn}) is induced by a combination of
(\ref{M4_begin:eqn}) and (\ref{M4_second:eqn}), while (\ref{magic2:eqn})
is (\ref{M4_1:eqn}), therefore the action of (\ref{M4_begin:eqn})-(\ref{M4_2:eqn})
generates the whole isometry group (\ref{magic1:eqn})-(\ref{magic2:eqn}) of $M_3$. To show that also
(\ref{M3_begin:eqn})-(\ref{M3_2:eqn}) get generated, we first note that as a composition of
(\ref{M4_third:eqn}) and (\ref{M4_end:eqn}) we get
\begin{align}
\left(\alpha_1,\alpha_2,\alpha_3,\alpha_4\right) & \longmapsto
    \left(\tfrac{\alpha_1}{\alpha_1-1}, 2-\alpha_2, \tfrac{\alpha_3}{\alpha_3-1}, 2-\alpha_4\right).\label{M4_more:eqn}
\end{align}
We then start with (\ref{M3_begin:eqn}). Up to (\ref{magic1:eqn})-(\ref{magic2:eqn}), that
we already know to come from (\ref{M4_begin:eqn})-(\ref{M4_2:eqn}), we can rewrite it as
$$\left(\alpha_2, \tfrac 12, \alpha_3\right)
\mapsto \left(4-\alpha_3, \tfrac 12, 4-\alpha_2\right)$$
and then we can generate it as follows:
\begin{align*}
(\ref{M3_begin:eqn}):\qquad \left(\alpha_2, \tfrac 12, \alpha_3\right) \leftrightsquigarrow &
\left(-1,\alpha_2-1,\tfrac 12, \alpha_3-1\right) \mathop{\mapsto}\limits_{(\ref{M4_more:eqn})}
\left(\tfrac 12, 3-\alpha_2, -1,3-\alpha_3\right) \\
\mathop{\mapsto}\limits_{(\ref{M4_begin:eqn})^2} &
\left(-1,3-\alpha_3, \tfrac 12, 3-\alpha_2\right) \leftrightsquigarrow
\left(4-\alpha_3, \tfrac 12, 4-\alpha_2\right).
\end{align*}
We next similarly realize (\ref{M3_a:eqn}) and (\ref{M3_b:eqn}),
again only up to (\ref{magic1:eqn})-(\ref{magic2:eqn}):
\begin{align*}
(\ref{M3_a:eqn}):\qquad \left(\alpha_2, \tfrac 32, \alpha_3\right) \leftrightsquigarrow &
\left(-1,\alpha_2-1,\tfrac 32, \alpha_3-1\right)  \mathop{\mapsto}\limits_{(\ref{M4_third:eqn})}
\left(\tfrac 32, \tfrac{\alpha_2-3}{\alpha_2-2}, -1,\tfrac{\alpha_3-3}{\alpha_3-2}\right) \\
\mathop{\mapsto}\limits_{(\ref{M4_begin:eqn})^2} &
\left(-1,\tfrac{\alpha_3-3}{\alpha_3-2}, \tfrac 32, \tfrac{\alpha_2-3}{\alpha_2-2}\right) \leftrightsquigarrow
\left(\tfrac{2\alpha_3-5}{\alpha_3-2}, \tfrac 32, \tfrac{2\alpha_2-5}{\alpha_2-2}\right);
\end{align*}
\begin{align*}
\left(\ref{M3_b:eqn}):\qquad (\tfrac 52, \alpha_2, \alpha_3\right) \leftrightsquigarrow &
\left(-1, \tfrac 32, \alpha_2,\alpha_3-1\right)  \mathop{\mapsto}\limits_{(\ref{M4_third:eqn})}
\left(\tfrac 32, -1, \tfrac{\alpha_2-2}{\alpha_2-1}, \tfrac{\alpha_3-3}{\alpha_3-2}\right) \\
\mathop{\mapsto}\limits_{(\ref{M4_begin:eqn})^3} &
\left(-1,\tfrac{\alpha_2-2}{\alpha_2-1}, \tfrac{\alpha_3-3}{\alpha_3-2}, \tfrac 32\right) \leftrightsquigarrow
\left(\tfrac{2\alpha_2-3}{\alpha_2-1}, \tfrac{\alpha_3-3}{\alpha_3-2}, \tfrac 52\right).
\end{align*}
We now observe that (\ref{M4_begin:eqn})-(\ref{M4_2:eqn}) induce on triples the map
\begin{equation}\label{varphi:map}
\begin{array}{r}
\left(4, \alpha_2, \alpha_3\right) \leftrightsquigarrow
\left(-1, 3, \alpha_2,\alpha_3-1\right)  \mathop{\mapsto}\limits_{(\ref{M4_more:eqn})}
\left(\tfrac 12, -1, \tfrac{\alpha_2}{\alpha_2-1}, 3-\alpha_3\right)\\
\mathop{\mapsto}\limits_{(\ref{M4_begin:eqn})^3}
\left(-1,\tfrac{\alpha_2}{\alpha_2-1}, 3-\alpha_3, \tfrac 12\right) \leftrightsquigarrow
\left(\tfrac{2\alpha_2-1}{\alpha_2-1}, 3- \alpha_3, \tfrac 32\right)
\end{array}
\end{equation}
and it is easy to see that (\ref{M3_c:eqn}) is obtained by conjugating
(\ref{M3_a:eqn}) under (\ref{varphi:map}). Moreover we can generate (\ref{M3_2:eqn}) as
$$\left(-1, 4,\alpha_3\right) \mathop{\mapsto}\limits_{(\ref{varphi:map})}
\left(\tfrac 32, \tfrac 32, 3- \alpha_3\right) \mathop{\mapsto}\limits_{(\ref{M3_a:eqn})}
\left(\tfrac 32, 4, \tfrac{1-2\alpha_3}{1-\alpha_3}\right) \mathop{\mapsto}\limits_{(\ref{M3_c:eqn})}
\left(-1, 4, \tfrac 1{\alpha_3}\right)$$
and the conclusion eventually follows because (\ref{M3_1:eqn}) is (\ref{M4_2:eqn}).
\end{proof}

\begin{teo} \label{M4:teo}
Every exceptional filling on $M_4$ is equivalent up to a composition of the maps \emph{(\ref{M4_begin:eqn})-(\ref{M4_2:eqn})}
to a filling containing one of
$$\begin{array}{cc}
0\quad \infty\quad (-1,-2,-1)\quad (-2, -2, -2, -2)\\
(-1, -3, -2, -3)\quad (-1, -2, -3, -4).
\end{array}$$
\end{teo}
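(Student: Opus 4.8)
The plan is to mimic exactly the argument already carried out for $M_2$ and $M_3$: run {\tt find\_exceptional\_fillings.py} on $M_4$, certify the three lists it returns, and then read off the six fillings from the resulting list (I) together with the classification of the exceptional fillings of $M_3=M_4(-1)$ given by Theorem~\ref{magic:teo}. Concretely, I would first feed $M_4$ to the code with the list {\tt exclude} set as displayed above --- discarding every filling equivalent under (\ref{M4_begin:eqn})-(\ref{M4_end:eqn}) to one containing the slope $-1$, that is, the sixteen slopes $-1,\tfrac32,3,\tfrac12$ on the four cusps, each of which gives $M_3$ --- and record the lists (I), (II), (III) shown above. Task~(iii) is then settled by running list (III) through Moser's Pari template, which certifies each geometric solution (raising {\tt min\_imaginary} should any test fail). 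Once this is done, the Agol-Lackenby 6-theorem~\cite{Ago6,L} guarantees that every isolated exceptional filling of $M_4$ is either equivalent under (\ref{M4_begin:eqn})-(\ref{M4_end:eqn}) to one containing $-1$ or occurs in list (I).

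Next I would dispose of tasks~(i) and~(ii). For~(i): letting (\ref{M4_begin:eqn})-(\ref{M4_end:eqn}) act, the twenty members of list (I) collapse to $0$, $\infty$, $(-2,-2,-2,-2)$, all of which are genuinely exceptional, as already noted --- $M_4(0)=M_5(-1,\emptyset,0)$ and $M_4(\infty)=M_5(-1,\infty)$ are non-hyperbolic because $0$ and $\infty$ are exceptional for $M_5$, and $M_4(-2,-2,-2,-2)=\seifdue D212{-1}\bigu{-1}41{-3}\seifdue D2131$ was recognized with the Recognizer in Subsection~\ref{notable:subsection}. For~(ii): I would run {\tt search\_geometric\_solutions.py} on the nineteen fillings of list (II); as for $M_2$ and $M_3$, this finds a geometric solution for each, on $M_4$ itself or on a finite cover of small degree --- which suffices, since hyperbolicity of a cover implies hyperbolicity of the manifold --- and all these fillings are closed, so no further iteration is needed.

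It remains to assemble the classification. By the two preceding paragraphs, every isolated exceptional filling on $M_4$ is equivalent under (\ref{M4_begin:eqn})-(\ref{M4_end:eqn}) either to one containing $0$, $\infty$, or $(-2,-2,-2,-2)$, or to one containing $-1$; in the latter case the blow-down $M_4(-1,\alpha_2,\alpha_3,\alpha_4)=M_3(\alpha_2+1,\alpha_3,\alpha_4+1)$ turns it into an exceptional filling of $M_3$, which a short inspection of proper sub-fillings shows to be isolated there as well. By Theorem~\ref{magic:teo} that $M_3$-filling is equivalent under (\ref{magic1:eqn})-(\ref{M3_2:eqn}) to one containing one of the nine fillings listed in that theorem, and by Proposition~\ref{generate:prop} those moves are induced by (\ref{M4_begin:eqn})-(\ref{M4_2:eqn}); pulling the nine $M_3$-fillings back through the correspondence of Proposition~\ref{generate:prop} yields
$$(-1,-1),\ (-1,0),\ (-1,1),\ (-1,2),\ (-1,\infty),\ (-1,-2,-1),\ (-1,-3,-2,-3),\ (-1,-2,-3,-4),\ \big(-1,-\tfrac13,4,3\big).$$
A direct inspection --- combining (\ref{M4_third:eqn})-(\ref{M4_end:eqn}), the partial map (\ref{M4_1:eqn}), and manipulations just like those in the proof of Proposition~\ref{generate:prop} --- shows that $(-1,0),(-1,1),(-1,2),(-1,\infty),(-1,-1)$ are each equivalent to a filling containing $0$ or $\infty$ and that $\big(-1,-\tfrac13,4,3\big)$ is equivalent to a filling containing one of $(-1,-3,-2,-3)$, $(-1,-2,-3,-4)$, while $(-1,-2,-1)$, $(-1,-3,-2,-3)$, $(-1,-2,-3,-4)$ survive. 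Together with $0$, $\infty$, $(-2,-2,-2,-2)$ this gives precisely the six fillings of the statement, and that no two of them are related by a composition of (\ref{M4_begin:eqn})-(\ref{M4_2:eqn}) follows by direct inspection, as announced in the remark after Theorem~\ref{Whitehead:teo}.

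I expect the hard part to be not conceptual but a matter of certified computation: getting Moser's template to accept the (very long) list (III) for $M_4$ and {\tt search\_geometric\_solutions.py} to produce geometric solutions for every entry of list (II) --- if need be by raising {\tt min\_imaginary} or by passing to finite covers --- since, once this is in place and Theorem~\ref{magic:teo} and Proposition~\ref{generate:prop} are available, the combinatorial reduction to the six fillings is entirely routine.
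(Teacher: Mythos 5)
Your overall strategy is the same as the paper's: run the code on $M_4$ with the sixteen $-1$-equivalent slopes excluded, certify lists (I)--(III), and combine the residual list (I) (which collapses to $0$, $\infty$, $(-2,-2,-2,-2)$) with the pull-back of Theorem~\ref{magic:teo} under the correspondence of Proposition~\ref{generate:prop}. The handling of $(-1,0)$, $(-1,1)$, $(-1,2)$, $(-1,\infty)$, $(-1,-1)$ matches the paper.

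There is, however, an error in how you dispose of the ninth pulled-back filling. You claim that $\left(-1,-\tfrac13,4,3\right)$ is equivalent under (\ref{M4_begin:eqn})--(\ref{M4_2:eqn}) to a filling containing $(-1,-3,-2,-3)$ or $(-1,-2,-3,-4)$. Since all three are fully filled $4$-tuples, ``containing'' here would force equality, and hence homeomorphism of the filled manifolds; but $M_4(-1,-3,-2,-3)$ and $M_4(-1,-2,-3,-4)$ are graph manifolds with a \emph{separating} JSJ torus of type $\seifdue D212{-1}\bigcup\seifdue D2131$, whereas $M_4\left(-1,-\tfrac13,4,3\right)$ is a filling of $M_4(-1,-2,-1)=(P\times S^1)/_X$ and thus has a non-separating JSJ torus --- they are not homeomorphic. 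The correct dismissal is the one the paper uses: rewrite $\left(-1,-\tfrac13,4,3\right)$ via (\ref{M4_1:eqn}) as $\left(-1,3,\tfrac23,3\right)$ and apply the composition (\ref{M4_more:eqn}) of (\ref{M4_third:eqn}) and (\ref{M4_end:eqn}) to obtain $\left(\tfrac12,-1,-2,-1\right)$, which contains $(-1,-2,-1)$ (the third entry of the list), not the fifth or sixth. Your final list of six is correct, but this intermediate identification needs to be fixed.
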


\begin{proof}
The code {\tt search\_geometric\_solutions.py} shows that the $12$ candidate hyperbolic manifolds in list (II)
are indeed hyperbolic. We know from above that
up to (\ref{M4_begin:eqn})-(\ref{M4_end:eqn})
an exceptional filling $(\alpha_1, \alpha_2, \alpha_3, \alpha_4)$
on $M_4$ either contains one of $0,\ \infty,\ (-2,-2,-2,-2)$ or
$\alpha_1=-1$ and $(\alpha_2+1,\alpha_3,\alpha_4+1)$ is exceptional for $M_3$.
In the latter case by Theorem~\ref{magic:teo} we have that
$(\alpha_2+1,\alpha_3,\alpha_4+1)$ contains one of
$$0\quad 1\quad 2\quad 3\quad \infty\quad (-1, -1)\quad (-1, -3, -3)\quad (-2, -2, -2) \quad \left(\tfrac 23, 4, 4\right)$$
up to the maps (\ref{magic1:eqn})-(\ref{M3_2:eqn}). Proposition~\ref{generate:prop} shows that the action of these
maps is generated by (\ref{M4_begin:eqn})-(\ref{M4_2:eqn}). We readily deduce that up to (\ref{M4_begin:eqn})-(\ref{M4_2:eqn})
an exceptional
$(-1,\alpha_2,\alpha_3,\alpha_4)$ for $M_4$ contains one of
$$\begin{array}{c}
(-1, -1)\quad (-1, 0)\quad (-1, 1)\quad (-1, 2)\quad (-1, \infty) \quad (-1, -2, -1) \\
(-1, -2, -3, -4)\quad (-1, -3, -2, -3) \quad  \left(-1, 3, \tfrac 23, 3\right).
\end{array}$$
We can then dismiss $(-1,0)$, $(-1,1)$, $(-1, 2)$ and $(-1,\infty)$ because we see that $0$, $1$, $2$, $\infty$ are
exceptional on $M_4$: we know that $\infty$ and $0$ are, while
$1$ is generated by $\infty$ and $2$ is generated by $0$ under both
(\ref{M4_third:eqn}) and (\ref{M4_end:eqn}).
Using (\ref{M4_1:eqn}) we can also transform
$(-1,-1)$ into $(-1,\emptyset,0)$ and then dismiss it. Finally, using (\ref{M4_more:eqn}) we can transform
$\left(-1, 3, \tfrac 23, 3\right)$ into $\left(\tfrac 12, -1, -2, -1\right)$ and dismiss it because it contains $(-1, -2, -1)$, and the
proof is complete.
\end{proof}

\begin{cor}
Every filling on $M_4$ is hyperbolic, except those listed below and those obtained from them
via compositions of the maps \emph{(\ref{M4_begin:eqn})-(\ref{M4_2:eqn})}:
\begin{align*}
M_4\left(\infty, \tfrac ab, \tfrac cd, \tfrac ef\right) & = \seiftre {S^2} abd{-c}ef \\
M_4\left(0, \tfrac ab, \tfrac cd, \tfrac ef\right) & = \seifdue {D}b{b-a}f{f-e} \bigu 0110 \seifdue D 2{-1}{c-2d}{c-d}\\
M_4\left(-1, -2, -1, \tfrac ab \right) & =  \seifuno{A}{b}{-a} \bigb 0110\\
M_4\left(-1, -2, -3, -4\right) & = \seifdue{D}212{-1} \bigu {-1}21{-1} \seifdue{D}2131 \\
M_4\left(-1, -3, -2, -3\right) & = \seifdue{D}212{-1} \bigu {-1}31{-2} \seifdue{D}2131 \\
M_4\left(-2, -2, -2, -2\right) & = \seifdue{D}212{-1} \bigu {-1}41{-3} \seifdue{D}2131.
\end{align*}
\end{cor}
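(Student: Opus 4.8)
The plan is to read the corollary off Theorem~\ref{M4:teo}. That theorem already tells us that a filling on $M_4$ fails to be hyperbolic exactly when, up to a composition of the maps \emph{(\ref{M4_begin:eqn})-(\ref{M4_2:eqn})}, it contains one of the six tuples $0$, $\infty$, $(-1,-2,-1)$, $(-1,-3,-2,-3)$, $(-1,-2,-3,-4)$, $(-2,-2,-2,-2)$; the six families displayed in the corollary are precisely the fillings containing these tuples (with the unconstrained slots written $\tfrac ab$, $\tfrac cd$, $\tfrac ef$, and empty slots giving the corresponding sub\nobreakdash-fillings). Since the maps \emph{(\ref{M4_begin:eqn})-(\ref{M4_2:eqn})} are induced by symmetries of $M_4$, applying them does not change the homeomorphism type of the filled manifold. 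So the only thing left to prove is the six explicit identifications of filled manifolds, and the dichotomy ``hyperbolic or in the list'' is then immediate.

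For the first line I would argue as in the proof of Corollary~\ref{main:cor}: an $\infty$-filling erases the corresponding component of L8n7, so $M_4(\infty)$ is the exterior of the open $3$-chain link, which is the product $P\times S^1$ of the pair-of-pants with a circle, Seifert fibred by the $S^1$ factor. I would then express, on each of the three surviving components of $\partial M_4$, the meridian and longitude coming from $S^3$ in terms of a boundary curve of $P$ (a section) and the fibre, and feed the three resulting slopes into the notation of Subsection~\ref{graph:subsection}; this produces $\seiftre{S^2}abd{-c}ef$, the swap of meridian and longitude together with the minus sign on the middle component reflecting the fact that the middle component of the open chain is clasped on both sides, so that its $S^3$-meridian is a Seifert fibre rather than a section. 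For the line $M_4\left(0,\tfrac ab,\tfrac cd,\tfrac ef\right)$ I would use that $0$ is equivalent to $\infty$ among the slopes on $M_5$, so that $M_4(0)=M_5(-1,\emptyset,0)$ is in fact a filling of $F=M_5(\infty)=(P\times S^1)\bigu0110(P\times S^1)$; the displayed graph-manifold expression is then read off from this splitting, exactly as the analogous line of Corollary~\ref{main:cor} was obtained in~\cite{MaPe}.

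The remaining four lines reduce to computations already in hand. The three closed lines follow from the identity
$$M_i(-2,\ldots,-2)=\seifdue{D}212{-1}\bigu{-1}i1{-i+1}\seifdue{D}2131$$
of Subsection~\ref{notable:subsection} (established via~\cite{MaPe} for $i=2,3$ and via the Recognizer~\cite{Rec} for $i=4$), combined with the blow-down relations $M_4(-1,\alpha_2,\alpha_3,\alpha_4)=M_3(\alpha_2+1,\alpha_3,\alpha_4+1)$ and $M_3(-1,\alpha_2,\alpha_3)=M_2(\alpha_2+1,\alpha_3+1)$ of Fig.~\ref{Rolfsen:fig}: indeed $M_4(-2,-2,-2,-2)$ is the case $i=4$, $M_4(-1,-3,-2,-3)=M_3(-2,-2,-2)$ is the case $i=3$, and $M_4(-1,-2,-3,-4)=M_3(-1,-3,-3)=M_2(-2,-2)$ is the case $i=2$. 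For the line $M_4\left(-1,-2,-1,\tfrac ab\right)$ the same blow-downs give $M_4\left(-1,-2,-1,\tfrac ab\right)=M_3\left(-1,-1,\tfrac ab+1\right)=M_2\!\left(0,\tfrac ab+2\right)$, and one concludes by combining the description $M_2(0)=(P\times S^1)\bigb0110$ from~\cite{MaPe} (recorded in Subsection~\ref{notable:subsection}) with the change of homology basis, which produces $\seifuno{A}{b}{-a}\bigb0110$.

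None of these individual recognitions is hard in itself---each is a blow-down, a small Kirby-calculus move, or a direct appeal to~\cite{MaPe} or the Recognizer---so the one genuinely delicate point is keeping the several orientation and homology-basis conventions mutually consistent, so that every sign and every entry of the Seifert and gluing data (the $(d,-c)$ on the first line, the $b-a$, $f-e$ and $(c-2d,c-d)$ on the second, the $-a$ on the third) comes out exactly as written in the chosen meridian--longitude coordinates. That bookkeeping is where I expect the actual effort to go.
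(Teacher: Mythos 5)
Your proposal follows essentially the same route as the paper's own proof. You correctly observe that the dichotomy in the corollary is immediate from Theorem~\ref{M4:teo}, so the only work is the six identifications; you handle the first line via the open $3$-chain exactly as the paper does; you handle the $0$-line by viewing $M_4(0)$ as a filling of $F=M_5(\infty)$, which is what the paper's displayed computation does explicitly via the blow-down and the transposition map~(\ref{transposition:eqn}); and for the third line and the three closed lines you trace the same blow-down chain $M_4(-1,\alpha_2,\alpha_3,\alpha_4)=M_3(\alpha_2+1,\alpha_3,\alpha_4+1)=\cdots$ back to $M_2(0)$ and to the identities $M_i(-2,\dots,-2)=\seifdue D212{-1}\bigu{-1}i1{-i+1}\seifdue D2131$ established in Subsection~\ref{notable:subsection}, which is exactly the source the paper implicitly relies on (the paper's proof notes it only needs to justify the first three lines, the closed ones having been recognized earlier).

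The one place your account is noticeably vaguer than the paper's is the $0$-line: you say the graph-manifold expression is ``read off from the splitting'' without naming the specific symmetry that realizes the filling as a filling of $F$. The paper pins this down with the concrete chain
\begin{align*}
M_4\left(0, \tfrac ab, \tfrac cd, \tfrac ef\right) & = M_5\left(0, \tfrac {a-b}b, -1, \tfrac{c-d}d, \tfrac ef \right) =
M_5\left( \tfrac b{a-b}, \infty, 2, \tfrac{c-d}{c-2d}, \tfrac{f-e}f \right)\\
& = M_5\left(\infty,  \tfrac b{a-b}, \tfrac{f-e}f, \tfrac{c-d}{c-2d}, 2 \right)
= \seifdue {D}b{b-a}f{f-e} \bigu 0110 \seifdue D {2}{-1}{c-2d}{c-d},
\end{align*}
using the blow-down, then~(\ref{transposition:eqn}), then a cyclic reordering, then Corollary~\ref{main:cor}. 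Your instinct (and your own caveat about bookkeeping) is right that this is where the work actually lives, so to make the proposal fully rigorous you would need to carry out this chain explicitly; your $M_2(0,\tfrac ab+2)$ reduction for the third line has the same character and should be finished with the change of basis giving $\seifuno{A}{b}{-a}\bigb0110$. These are genuine computations, not gaps in the strategy, and your route is the paper's route.
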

\begin{proof}
We only need to check the correct expression for the filled manifold on the first three lines
depending on the parameters $a,b, \ldots, f$. The equation on the first line
follows by expressing $M_4(\infty)$ as an open chain link with 3 components. For the equation
on the second line we have
\begin{align*}
M_4\left(0, \tfrac ab, \tfrac cd, \tfrac ef\right) & = M_5\left(0, \tfrac {a-b}b, -1, \tfrac{c-d}d, \tfrac ef \right) =
M_5\left( \tfrac b{a-b}, \infty, 2, \tfrac{c-d}{c-2d}, \tfrac{f-e}f \right) \\
& = M_5\left(\infty,  \tfrac b{a-b}, \tfrac{f-e}f, \tfrac{c-d}{c-2d}, 2 \right) \\
& = \seifdue {D}b{b-a}f{f-e} \bigu 0110 \seifdue D {2}{-1}{c-2d}{c-d}
\end{align*}
using (\ref{transposition:eqn}) and Corollary~\ref{main:cor}. Finally the equation on the third line
also follows from Corollary~\ref{main:cor} or~\cite{MaPe}.
\end{proof}

\subsection{The minimally twisted 5-chain link} \label{M5:subsection}
We eventually prove here Theorem~\ref{main:teo}, concerning
the isolated exceptional fillings on $M_5$. Recall first that $M_5$ decomposes into 10 regular
ideal hyperbolic tetrahedra, and this decomposition is totally symmetric, so each
cusp section decomposes into $8$ equilateral triangles. More precisely, with respect to the meridian-longitude
homology basis of the cusp, its shape is given by $-\tfrac12 + i\tfrac{\sqrt 3}2$, and it is quite easy
to see that the area of each cusp in a maximal horospherical cusp
section is equal to $A = 2\sqrt 3$, because each individual equilateral triangle has Euclidean area $\tfrac{\sqrt3}4$.
The length of a slope $\tfrac pq$ is hence
$$\ell\left(\tfrac pq\right) = \sqrt{4\big( \left(p-\tfrac q2\right)^2 +3\left(\tfrac q2\right)^2 \big)} = 2\sqrt{p^2 +q^2 - pq}.$$
The slopes having length at most 6 are therefore
$$\infty\quad -2\quad -1\quad -\tfrac12\quad 0\quad \tfrac13\quad \tfrac12\quad\tfrac23\quad 1\quad \tfrac32\quad 2\quad 3.$$
Recall now that the action on slopes of the symmetry group of $M_5$ is generated by
(\ref{first:eqn})-(\ref{transposition:eqn}); of course
(\ref{first:eqn}) and (\ref{firstbis:eqn}) act trivially on the set of slopes of length less than $6$
just enumerated, but (\ref{transposition:eqn}) allows to group them as
$$\{\infty,\ 0,\ 1\},\qquad \left\{-1,\ \tfrac 12,\ 2\right\},\qquad \left\{-2,\ -\tfrac 12,\ \tfrac 13,\ \tfrac 23,\ \tfrac 32, 3\right\}.$$
Now, we already know that the slopes in the first set are exceptional and those in the second set
are not, since they give $M_4$ as a filling. SnapPy tells us that $M_5(-2)$ is hyperbolic
so the slopes in the third set are again non-exceptional. Since we know the isolated exceptional fillings on $M_4$,
to understand those on $M_5$ we are only left to understand those on $M_5(-2)$, which
we can do feeding $M_5(-2)$ to our code. But, to avoid considering again the slopes
$\infty,\ 0,\ 1$ that we know to be exceptional on $M_5$, and those coming from
exceptional slopes on $M_4$, we put the preamble
\begin{verbatim}
exclude = [
[(1,0),(0,0),(0,0),(0,0)],[(0,1),(0,0),(0,0),(0,0)],
[(1,1),(0,0),(0,0),(0,0)],[(0,0),(1,0),(0,0),(0,0)],
[(0,0),(0,1),(0,0),(0,0)],[(0,0),(1,1),(0,0),(0,0)],
[(0,0),(0,0),(1,0),(0,0)],[(0,0),(0,0),(0,1),(0,0)],
[(0,0),(0,0),(1,1),(0,0)],[(0,0),(0,0),(0,0),(1,0)],
[(0,0),(0,0),(0,0),(0,1)],[(0,0),(0,0),(0,0),(1,1)],
[(-1,1),(0,0),(0,0),(0,0)],[(2,1),(0,0),(0,0),(0,0)],
[(1,2),(0,0),(0,0),(0,0)],[(0,0),(-1,1),(0,0),(0,0)],
[(0,0),(2,1),(0,0),(0,0)],[(0,0),(1,2),(0,0),(0,0)],
[(0,0),(0,0),(-1,1),(0,0)],[(0,0),(0,0),(2,1),(0,0)],
[(0,0),(0,0),(1,2),(0,0)],[(0,0),(0,0),(0,0),(-1,1)],
[(0,0),(0,0),(0,0),(2,1)],[(0,0),(0,0),(0,0),(1,2)]]
\end{verbatim}
The output of our code is the following:
\begin{verbatim}
Candidate exceptional fillings:
With 1 fillings:
[]
Total: 0
With 2 fillings:
[]
Total: 0
With 3 fillings:
[]
Total: 0
With 4 fillings:
[[(-2,1),(-2,1),(-2,1),(-2,1)],[(-2,1),(1,3),(3,1),(1,3)],
[(-1,2),(-2,1),(3,2),(3,2)],[(-1,2),(3,1),(3,1),(-1,2)],
[(1,3),(3,1),(1,3),(-2,1)],[(1,3),(3,2),(3,2),(1,3)],
[(3,2),(3,2),(-2,1),(-1,2)]]
Total: 7
Candidate hyperbolic fillings:
With 1 fillings:
[]
Total: 0
With 2 fillings:
[]
Total: 0
With 3 fillings:
[]
Total: 0
With 4 fillings:
[[(-2,1),(-2,1),(3,1),(-2,1)],[(-2,1),(1,3),(3,1),(2,3)],
[(-2,1),(3,1),(3,1),(-2,1)],[(-1,2),(-2,1),(3,2),(3,1)],
[(-1,2),(-1,2),(3,2),(3,2)],[(-1,2),(3,1),(3,1),(-2,1)],
[(-1,3),(1,3),(3,2),(2,3)],[(-1,3),(3,1),(-2,1),(-2,1)],
[(-1,3),(3,1),(-1,2),(-2,1)],[(1,3),(3,2),(2,3),(2,3)],
[(1,3),(4,3),(2,3),(2,3)],[(2,3),(2,3),(3,1),(1,3)],
[(3,1),(1,3),(3,1),(1,3)],[(3,2),(3,1),(-1,2),(-2,1)],
[(3,2),(3,1),(-1,2),(-1,2)]]
Total: 15
\end{verbatim}

To achieve tasks (i) and (ii) we then have to show that the $7$ candidate exceptional
(closed) fillings in list (I) are indeed exceptional, and that the $15$ candidate hyperbolic
(closed) fillings in list (II) are indeed hyperbolic. For task (i) we take into account the
maps (\ref{first:eqn})-(\ref{transposition:eqn}), under which the $7$ fillings reduce to
$$(-2, -2, -2, -2, -2)\qquad \left(-2, -\tfrac 12, 3, 3, - \tfrac 12\right).$$
The \emph{Recognizer}~\cite{Rec} then confirms that both these fillings are
exceptional and give rise to the graph manifolds
described in Corollary~\ref{main:cor}.
Task (ii) is achieved directly by running the code
{\tt search\_geometric\_solutions.py} on the $15$ candidate hyperbolic manifolds.

To conclude the proof of Theorem~\ref{main:teo} we are only left to enumerate up to
the action of (\ref{first:eqn})-(\ref{figure8:eqn}) the exceptional fillings
of $M_5$ coming from the exceptional ones on $M_4=M_5(-1)$ determined in Theorem~\ref{M4:teo}.
Recall first that
$$M_5(-1,\alpha_2, \alpha_3, \alpha_4, \alpha_5) = M_4(\alpha_2+1, \alpha_3, \alpha_4, \alpha_5+1)$$
and that up to this identification the maps (\ref{first:eqn})-(\ref{figure8:eqn}) generate
the maps (\ref{M4_begin:eqn})-(\ref{M4_2:eqn}) induced by the symmetries of $M_4$.
So up to (\ref{M4_begin:eqn})-(\ref{M4_2:eqn}) we see that $(-1,\alpha_2, \alpha_3, \alpha_4, \alpha_5)$
is exceptional for $M_5$ if and only if $(\alpha_1+1, \alpha_2, \alpha_3, \alpha_4+1)$ contains one of
$$\begin{array}{ccc}
0 & \infty & (-1, -2, -1)\\ (-2, -2, -2, -2) & (-1, -3, -2, -3) & (-1, -2, -3, -4)
\end{array}$$
\emph{i.e.}, if and only if $(-1,\alpha_2, \alpha_3, \alpha_4, \alpha_5)$ contains one of
$$\begin{array}{ccc}
(-1,-1) & (-1, \infty) & (-1, -2, -2, -1)\\
(-1, -3, -2, -2, -3) &  (-1, -2, -3, -2, -4) & (-1, -2, -2, -3, -5)
\end{array}$$
but we can dismiss $(-1,-1)$ and $(-1,\infty)$ because the latter contains $\infty$,
and the former does up to (\ref{first:eqn})-(\ref{figure8:eqn}).
The proof is now complete.

\subsection{Computer time}
As thoroughly explained above,
during our investigation we have used two different programs: the main python code {\tt find\_exceptional\_fillings} and a shorter python code named {\tt search\_geometric\_solutions}. Both codes are
available from~\cite{M}, and the computer time spent to run each of them for each manifold $M_2, \ldots, M_5$ is shown in
Table~\ref{computer_details:table}. As one can see, once properly organized as we have described, the search requires very little computer time.

\begin{table}
\begin{center}
\begin{tabular}{r||c|c|c|c}
 & $M_2$ & $M_3$ & $M_4$ & $M_5$ \\
\hline \hline
{\tt find\_exceptional\_fillings} & $1''$ & $24''$ & $2'\,10''$ & $3'\,48''$  \\
{\tt search\_geometric\_solutions} & $<1''$ & $<1''$ & $<1''$ & $<1''$
\end{tabular}
\end{center}
\caption{Computer time needed by each code to classify the exceptional fillings of $M_2, \ldots, M_5$.}
\label{computer_details:table}
\end{table}

\section{Tables}
In this section we expand Theorems~\ref{main:teo} and~\ref{M4:teo}, listing all the isolated exceptional fillings
on $M_4$ and $M_5$, up to the action of their isometry groups. The results stated here are summarized by the
entries in Table~\ref{exceptional_2:table}, while those in Table~\ref{exceptional_1:table} were
obtained \emph{a posteriori} using the action of the isometry groups.
We also show all the filled manifolds, to do which,
in addition to the notation for Seifert and graph manifolds introduced in Section \ref{graph:subsection}, we will use
$T_X$ to denote the torus-bundle on $S^1$ obtained from $T\times[0,1]$ by gluing
$T\times\{0\}$ to $T\times\{1\}$ along $X\in\GL(2,\matZ)$ with respect to
parallel homology bases; note that for $T_X$ to be orientable now one needs to have
$\det(X)=+1$.

\begin{teo}\label{M4_large:teo}
The isolated exceptional fillings on $M_4$, seen up to the action of the isometry group of $M_4$
generated by \emph{(\ref{M4_begin:eqn})-(\ref{M4_end:eqn})}, are those listed
in Tables~\ref{M4:table} and~\ref{M4_2:table}.
These fillings are pairwise inequivalent under \emph{(\ref{M4_begin:eqn})-(\ref{M4_end:eqn})}.
\end{teo}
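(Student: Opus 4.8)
Theorem~\ref{M4_large:teo} is the explicit form of Theorem~\ref{M4:teo}: rather than exhibiting six fillings that generate all others under the \emph{enlarged} group of maps \emph{(\ref{M4_begin:eqn})-(\ref{M4_2:eqn})}, it enumerates \emph{all} isolated exceptional fillings of $M_4$ modulo its genuine isometry group, whose action on slopes is generated by \emph{(\ref{M4_begin:eqn})-(\ref{M4_end:eqn})} alone --- note that \emph{(\ref{M4_1:eqn})-(\ref{M4_2:eqn})} are induced not by symmetries of $M_4$ but by symmetries of the hyperbolic filling $M_3=M_4(-1)$, hence are unavailable here. The plan is to assemble the list from two sources, establish completeness, certify the filled manifolds recorded in Tables~\ref{M4:table}--\ref{M4_2:table}, and finally check pairwise inequivalence.

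\textbf{Completeness.} Since every exceptional filling contains an isolated one, it suffices to enumerate the isolated ones. By the Agol--Lackenby $6$-theorem~\cite{Ago6,L} every slope occurring in an isolated exceptional filling of $M_4$ has length at most $6$, hence lies in the finite explicit list computed by the code from the cusp geometry of $M_4$. I would split these fillings according to whether, up to \emph{(\ref{M4_begin:eqn})-(\ref{M4_end:eqn})}, they contain the slope $-1$, whose orbit under that group is $\{-1,\tfrac12,\tfrac32,3\}$. For those avoiding this orbit I use the run of {\tt find\_exceptional\_fillings.py} on $M_4$ with exactly these four slopes excluded on every cusp (the $16$-element {\tt exclude} list): its list~(I) reduces under \emph{(\ref{M4_begin:eqn})-(\ref{M4_end:eqn})} to $0$, $\infty$, and $(-2,-2,-2,-2)$, and rigor is guaranteed by the $6$-theorem together with Moser's test on list~(III) and with the confirmation --- already obtained in the proof of Theorem~\ref{M4:teo} --- that the candidate hyperbolic fillings of list~(II) are truly hyperbolic. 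For those containing $-1$ I use the identification $M_4(-1,\alpha_2,\alpha_3,\alpha_4)=M_3(\alpha_2+1,\alpha_3,\alpha_4+1)$ of Figure~\ref{Rolfsen:fig}: such a filling is isolated exceptional on $M_4$ precisely when the corresponding filling of $M_3$ is exceptional and its proper sub-fillings are hyperbolic --- the sub-filling that empties the $-1$ coordinate being hyperbolic by the isolation hypothesis, the remaining ones matching proper sub-fillings of the $M_3$-filling. Importing the complete classification of the isolated exceptional fillings of $M_3$ (from~\cite{MaPe}, equivalently from the $M_3$ run of our code in Subsection~\ref{M3:subsection}), translating each via $(\beta_1,\beta_2,\beta_3)\mapsto(-1,\beta_1-1,\beta_2,\beta_3-1)$, and reducing modulo \emph{(\ref{M4_begin:eqn})-(\ref{M4_end:eqn})} produces the remaining entries of the tables; since \emph{(\ref{M4_1:eqn})-(\ref{M4_2:eqn})} may not be used here, a single $\permu_3$-orbit on $M_3$ can split into several orbits on $M_4$, which accounts for the bulk of the list.

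\textbf{Filled manifolds and inequivalence.} The ``isolated'' property of each tabulated filling is automatic, since the code (for both $M_4$ and $M_3$) only outputs fillings all of whose proper sub-fillings were verified hyperbolic along the search. The identification of each filled manifold in Tables~\ref{M4:table}--\ref{M4_2:table} --- Seifert pieces, graph manifolds glued along the stated matrices, and the torus bundles $T_X$ --- is certified case by case with the \emph{Recognizer}~\cite{Rec}, and for the one-parameter families by hand via the graph-manifold calculus of Subsection~\ref{graph:subsection}, exactly as in Corollary~\ref{main:cor}; this simultaneously re-proves exceptionality. Pairwise inequivalence under \emph{(\ref{M4_begin:eqn})-(\ref{M4_end:eqn})} then follows by direct inspection: homeomorphism type is an isometry invariant, so fillings (or families) with non-homeomorphic results are inequivalent, and for the finitely many pairs whose results are homeomorphic one computes the finite orbit of a representative under the group generated by \emph{(\ref{M4_begin:eqn})-(\ref{M4_end:eqn})} and checks that the chosen representatives lie in distinct orbits.

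\textbf{Main obstacle.} I expect the real difficulty to be the bookkeeping in the $-1$ case: translating the $M_3$ classification correctly, propagating the isolation condition through the sub-filling correspondence, carrying out the reduction under the \emph{smaller} group \emph{(\ref{M4_begin:eqn})-(\ref{M4_end:eqn})} rather than \emph{(\ref{M4_begin:eqn})-(\ref{M4_2:eqn})}, and merging the outcome with $\{0,\infty,(-2,-2,-2,-2)\}$ without double counting --- all while matching the row $2,2,4,22$ of Table~\ref{exceptional_2:table}.
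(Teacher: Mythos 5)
Your overall strategy matches the paper's: split on whether, up to the isometry group of $M_4$, the filling contains $-1$; handle the non-$(-1)$ case via the computer run with the $16$-element {\tt exclude} list (yielding $0$, $\infty$, $(-2,-2,-2,-2)$ up to the isometry group); handle the $(-1)$ case by pulling back the classification on $M_3=M_4(-1)$ through the blow-down correspondence, using only the isometry group (\ref{M4_begin:eqn})--(\ref{M4_end:eqn}) so that the $\permu_3$-orbits on $M_3$ can split; and certify the filled manifolds with the Recognizer and the graph-manifold calculus. That is exactly the paper's route.

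There is, however, a genuine flaw in the $(-1)$ case, in the sentence claiming that $(-1,\beta_2,\beta_3,\beta_4)$ is isolated exceptional on $M_4$ precisely when $(\beta_2+1,\beta_3,\beta_4+1)$ is exceptional on $M_3$ with hyperbolic proper sub-fillings, with ``the sub-filling that empties the $-1$ coordinate being hyperbolic by the isolation hypothesis.'' That clause is not automatic and is not a consequence of the $M_3$-side isolation condition: it is an extra condition that fails in concrete cases. For example, $(\infty,\emptyset,\emptyset)$ is an isolated exceptional filling of $M_3$ (cf.\ Theorem~\ref{magic:teo}), and its pullback $(-1,\infty,\emptyset,\emptyset)$ to $M_4$ has the proper sub-filling $(\emptyset,\infty,\emptyset,\emptyset)$, which is $M_4(\infty)$ up to symmetry and hence non-hyperbolic, so the pullback is not isolated. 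The paper's proof handles this with the explicit step ``remove those that are not isolated on $M_4$,'' which your biconditional erases. You do flag ``propagating the isolation condition'' as an obstacle, but the main argument as written would produce non-isolated entries in the tables; the fix is simply to impose, as a separate hypothesis, that the sub-filling obtained by emptying the $-1$ slot is hyperbolic, and to discard the pullbacks for which this fails.
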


\begin{table}
\begin{center}
\begin{tabular}{c|c|c}
$k$ & Exceptional fillings & Filled manifold \\
\hline\hline
 1 & $\left(\infty\right)$ & \alza $P\times S^1$ \\
\cline{2-3}
   & $\left(0\right)$ & \alza $\big(P\times S^1\big) \bigu 0110 \seifuno A21$ \\
\hline
 2 & \alza $\left(-1,-1\right)$  &  $\seifdue D 2{-1}31 \bigu 0110 \left(P\times S^1\right)$ \\
\cline{2-3}
 & \alza $\left(-1,\emptyset,3\right)$ & $\seifuno A21 \bigu 0110 \seifuno A21$ \\
\hline
 3 & \begin{tabular}{cc} \alza $\left(-1,-2, -1\right)$ & \alza $\left(-2,-1,-2\right)$ \end{tabular} & $\big(P\times S^1\big) \bigb 0110$ \\
\cline{2-3}
 & \begin{tabular}{cc} \alza $\left(-1, -\tfrac 12, 4\right)$ & \alza $\left(-1,\tfrac 12, \tfrac 52\right)$ \end{tabular} &
 $\seifdue D 2{-1}31 \bigu 0110 \seifuno A2{-1}$ \\
\end{tabular}
\end{center}
\caption{Non-closed isolated exceptional fillings on $M_4$, split according to the
number $k$ of filled slopes, up to the action of the isometry group of $M_4$.}
\label{M4:table}
\end{table}

\begin{table}
\begin{center}
\begin{tabular}{c|c}
Exceptional fillings & Filled manifold \\
\hline\hline
\begin{tabular}{c} \alza $\left(-1, -2, -2, -5\right)$ \\ $\left(-1, -3, -1, -5\right)$ \\ \alza $\left(-1, -2, -4, -3\right)$ \end{tabular}
& $\seiftre {S^2}31 31 4{-3}$ \\
\hline
\begin{tabular}{c}
\alza $\left(-1, -2, -2, -4\right)$ \\ $\left(-1, -3, -1, -4\right)$ \\ \alza $\left(-1, -2, -3, -3\right)$ \end{tabular}
& $\seiftre {S^2}2{-1} 41 51$ \\
\hline
\begin{tabular}{c}
\alza $\left(-1, -2, -2, -3\right)$ \\ $\left(-1, -3, -1, -3\right)$ \end{tabular}
& $\seiftre {S^2}2{-1} 31 71$ \\
\hline
\begin{tabular}{c} \alza $\left(-1, -2, -2, -6\right)$ \\ $\left(-1, -3, -1, -6\right)$ \\ \alza $\left(-1, -3, -5, -2\right)$ \end{tabular}
& $\seifdue D212{-1} \bigu {-1}110 \seifdue D2131$ \\
\hline
\begin{tabular}{c} \alza $\left(-1, \tfrac 12, \tfrac 83, \tfrac 12\right)$ \\ $\left(-1, -2, 4, -\tfrac 23\right)$ \\ \end{tabular}
&    $\seifdue{D}212{-1}
        \bigcup\nolimits_{{\tiny{\matr 110{-1}}}\phantom{\Big|}\!\!}
        \seifdue{D}2131\phantom{\Big|}$ \\
\hline
\alza $\left(-1, \tfrac 23, \tfrac 52, \tfrac 23\right)$
& $\seifdue{D}212{-1}
        \bigcup\nolimits_{{\tiny{\matr 21{-1}{-1}}}\phantom{\Big|}\!\!}
        \seifdue{D}2131\phantom{\Big|}$ \\
\hline
\begin{tabular}{c}
\alza $\left(-1, -2, -3, -4\right)$ \\ $\left(-1, -4, -1, -4\right)$ \end{tabular}
&     $\seifdue{D}212{-1}
        \bigcup\nolimits_{{\tiny{\matr {-1}21{-1}}}\phantom{\Big|}\!\!}
        \seifdue{D}2131\phantom{\Big|}$ \\
\hline
$\left(-1, -3, -2, -3\right)$ & $\seifdue{D}212{-1}
      \bigcup\nolimits_{{\tiny{\matr {-1}31{-2}}}\phantom{\Big|}\!\!}
        \seifdue{D}2131\phantom{\Big|}$ \\
\hline
\alza $\left(-2, -2, -2, -2\right)$
& $\seifdue{D}212{-1}
        \bigcup\nolimits_{{\tiny{\matr {-1}41{-3}}}\phantom{\Big|}\!\!}
        \seifdue{D}2131\phantom{\Big|}$ \\
\hline
\alza $\left(-1, \tfrac 12, \tfrac 32, 3\right)$
& $T_{\tiny{\matr {-3}1{-1}0}}$ \\
\hline
\alza $\left(-1, 4, 5, -\tfrac 12\right)$
& $\seifuno A 21 \bigb 0110$ \\
\hline
\begin{tabular}{c}
\alza $\left(-1, 3, 4, -\tfrac 13\right)$ \end{tabular}
& $\seifuno{A}21\big/_{{\tiny{\matr 1110}}\phantom{\Big|}}\phantom{\Big|}$ \\
\hline
\alza $\left(-1,  \tfrac 32, \tfrac 52, \tfrac 13\right)$ & $\seifuno A 21 \bigb 2110$
\end{tabular}
\end{center}
\caption{Closed isolated exceptional fillings on $M_4$ up to the action of the isometry group of $M_4$.}
\label{M4_2:table}
\end{table}

\begin{proof}
The discussion in Subsection~\ref{M4:subsection} shows that, up to the action
(\ref{M4_begin:eqn})-(\ref{M4_end:eqn}) of the isometry group of $M_4$,
an isolated exceptional filling on $M_4$ either contains $-1$ or is equal to $0$, $\infty$,
or $\left(-2,-2,-2,-2\right)$. If it contains $-1$ then it is of type $\left(-1, \alpha_2 -1, \alpha_3, \alpha_4-1\right)$
where $\left(\alpha_2, \alpha_3, \alpha_4\right)$ is an isolated exceptional filling on the magic manifold $M_3$,
as described in Theorem~\ref{magic:teo}. But now we are \emph{not} identifying slopes on $M_4$
equivalent under the maps (\ref{M4_1:eqn})-(\ref{M4_2:eqn}) induced
by isometries of $M_3=M_4(-1)$ or $M_2=M_4(-1,-2)$ or $M_1=M_4(-1,-2,-2)$, therefore
we must take the slopes listed in Theorem~\ref{magic:teo}, consider their
full orbit under the isometries (\ref{magic1:eqn})-(\ref{magic2:eqn}), pull them back to
$M_4$, and then remove those that are not isolated on $M_4$ and
mod out under (\ref{M4_begin:eqn})-(\ref{M4_end:eqn}).
The process is long but straight-forward and leads to the tables, with the
manifolds always identified by hand and/or using the \emph{Recognizer}.
\end{proof}

\begin{teo} \label{M5_large:teo}
The isolated exceptional fillings on $M_5$, seen up to the action of the isometry group of $M_5$ generated by
\emph{(\ref{first:eqn})-(\ref{transposition:eqn})}, are
those listed in Tables~\ref{M_5:table},~\ref{M_5_2:table}, and~\ref{M_5_3:table}.
These fillings are pairwise inequivalent under \emph{(\ref{first:eqn})-(\ref{transposition:eqn})}.
\end{teo}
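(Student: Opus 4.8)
The plan is to mimic the proof of Theorem~\ref{M4_large:teo}, bootstrapping this time from the classification of the isolated exceptional fillings of $M_4=M_5(-1)$ recorded in Tables~\ref{M4:table} and~\ref{M4_2:table}.

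I would start by quoting the reduction obtained on the way to Theorem~\ref{main:teo} in Subsection~\ref{M5:subsection}. The Agol--Lackenby $6$-theorem forces every isolated exceptional filling of $M_5$ to have all of its slopes of length at most $6$; since such slopes fall, up to the maps \emph{(\ref{first:eqn})-(\ref{transposition:eqn})}, into the three classes $\{\infty,0,1\}$, $\{-1,\tfrac12,2\}$ and $\{-2,-\tfrac12,\tfrac13,\tfrac23,\tfrac32,3\}$, the filling is, up to those maps, either the single slope $(\infty)$, or a filling containing $-1$ (so that it is determined by a filling of $M_4$), or a filling containing $-2$ (so that it is determined by a filling of $M_5(-2)$, which was fed to the code). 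The code's output on $M_5(-2)$, combined with \emph{(\ref{first:eqn})-(\ref{transposition:eqn})}, leaves only $(-2,-2,-2,-2,-2)$ and $(-2,-\tfrac12,3,3,-\tfrac12)$; these are isolated on $M_5$ because each of their proper sub-fillings is hyperbolic (the code found no exceptional filling of $M_5(-2)$ with fewer than four filled slopes, and the relevant four-cusp fillings of $M_5$ are hyperbolic by the earlier computations), and the two resulting graph manifolds are those in Corollary~\ref{main:cor}, identified with the \emph{Recognizer}. These two, together with $(\infty)$ (whose filled manifold is $F$ of Fig.~\ref{open_chain:fig}), are the first rows of the tables.

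The heart of the argument is the case of a filling containing $-1$. Through the blow-down identification $M_5(-1,\beta_2,\beta_3,\beta_4,\beta_5)=M_4(\beta_2+1,\beta_3,\beta_4,\beta_5+1)$, such a filling is an isolated exceptional filling of $M_5$ exactly when $(\beta_2+1,\beta_3,\beta_4,\beta_5+1)$ is one of $M_4$ \emph{and} the four-cusp filling obtained by additionally deleting the $-1$ is hyperbolic. The subtle point is that this identification turns the maps \emph{(\ref{first:eqn})-(\ref{transposition:eqn})} into only a \emph{proper} subgroup $\Gamma$ of the isometry group of $M_4$ used to state Theorem~\ref{M4_large:teo}: in particular $\Gamma$ does not contain the cyclic symmetry \emph{(\ref{M4_begin:eqn})}, which corresponds to the map \emph{(\ref{blow:eqn})} coming from a symmetry of $M_4$ not induced by $M_5$. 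So one cannot simply copy the $M_4$ tables. Instead I would take each representative listed there, spread it into its full orbit under the isometry group of $M_4$ generated by \emph{(\ref{M4_begin:eqn})-(\ref{M4_end:eqn})}, translate the resulting tuples back into fillings $(-1,\gamma_1-1,\gamma_2,\gamma_3,\gamma_4-1)$ of $M_5$, discard those that fail to be isolated on $M_5$ (their $-1$ can be deleted leaving an exceptional four-cusp filling, which then contains a filling appearing lower in the recursion) or that are \emph{(\ref{first:eqn})-(\ref{transposition:eqn})}-equivalent to one of the fillings of the previous step, and finally reduce the survivors modulo \emph{(\ref{first:eqn})-(\ref{transposition:eqn})}. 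Carrying this out, and recognising every resulting filled manifold by hand and with the \emph{Recognizer}, reproduces Tables~\ref{M_5:table}, \ref{M_5_2:table} and~\ref{M_5_3:table}.

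Finally, the asserted pairwise inequivalence under \emph{(\ref{first:eqn})-(\ref{transposition:eqn})} is checked by direct inspection of the fractional-linear maps, exactly as was done for $M_2$, $M_3$ and $M_4$; where convenient it can be shortcut by separating two fillings through an invariant of the associated manifold, such as first homology, the number and type of JSJ blocks, or the geometric intersection number of the Seifert fibres along a splitting torus. The main difficulty I expect is not conceptual but one of scale: the $M_5$ orbits are by far the largest among $M_1,\dots,M_5$, each $M_4$-representative must be blown up to its full isometry orbit and then re-reduced modulo the smaller group $\Gamma$ while keeping track of which pullbacks stay isolated on $M_5$, and the long resulting list of Seifert, graph, and torus-bundle fillings must be recognised rigorously. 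As in Theorem~\ref{M4_large:teo}, this step is ``long but straightforward''.
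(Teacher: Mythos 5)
Your proposal follows essentially the same route as the paper's proof: the paper likewise sends the non-$M_4$ fillings from Theorem~\ref{main:teo} directly into the tables, and for those containing $-1$ it spreads the $M_4$ representatives under the full isometry group of $M_4$, pulls back to $M_5$ via the blow-down, removes the non-isolated ones, and re-reduces modulo (\ref{first:eqn})--(\ref{transposition:eqn}), identifying the filled manifolds by hand or with the \emph{Recognizer}. You also correctly pinpoint the key subtlety — that the $M_5$ isometries induce only a proper subgroup of the $M_4$ isometry group, which is exactly why the orbit-spreading step cannot be skipped — so your argument matches the paper's in both structure and substance.
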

\begin{proof}
The scheme of the proof is similar, and we omit all details (carried out using a dedicated code).
The isolated exceptional fillings on $M_5$ described in
Theorem~\ref{main:teo} not coming from $M_4$, namely not containing
a $-1$ slope up to (\ref{first:eqn})-(\ref{transposition:eqn}), contribute directly to the tables.
Those coming from $M_4$ are acted on using the full isometry group of $M_4$,
pulled back to $M_5$, depurated from the non-isolated ones, and modded out
under the isometry group of $M_5$. Again the manifolds are identified
by hand and/or using the \emph{Recognizer}.
\end{proof}

\begin{table}
\begin{center}
\begin{tabular}{c|c|c}
$k$ & Exceptional fillings & Filled manifold \\
\hline\hline
 1 & $\left(1\right)$ & \alza $\big(P\times S^1\big) \bigu 0110 \big(P\times S^1\big)$ \\
\hline
 2 & $\left(-1,-1\right)$ & \alza $\big(P\times S^1\big) \bigu 0110 \seifuno A21$ \\
\hline
 3 & \begin{tabular}{c} \alza $\left(-1,-2,-1\right)$ \\ \alza $\left(-2,-1,-2\right)$ \end{tabular} &
    $\big(P\times S^1\big) \bigu 0110 \seifdue D2131$ \\
\cline{2-3}
 & \alza $\left(\tfrac 12,3, \tfrac 13\right)$ & $\seifuno A21 \bigu 0110 \seifuno A21$ \\
\hline
 4 & \begin{tabular}{c} \alza $\left(-1,-2, -2, -1\right)$ \\ \alza $\left(-1,-3,-1,-2\right)$
        \\ \alza $\left(-2,-2,-1,-3\right)$ \end{tabular} & $\big(P\times S^1\big) \bigb 0110$ \\
\cline{2-3}
 & \begin{tabular}{c} \alza $\left(-1,-2, \tfrac 12, \tfrac 52\right)$ \\ \alza $\left(-2, -1,-\tfrac 12, \tfrac 52\right)$ \\
 \alza $\left(-1,-2, -\tfrac 12, 4\right)$ \\ \alza $\left(-2, -1,-\tfrac 32, 4\right)$ \end{tabular} &
 $\seifdue D 2{-1}31 \bigu 0110 \seifuno A2{-1}$
\end{tabular}
\end{center}
\caption{Non-closed isolated exceptional fillings on $M_5$,
split according to the number $k$ of filled slopes,
up to the action of the isometry group of $M_5$.}
\label{M_5:table}
\end{table}

\begin{table}
\begin{center}
\begin{tabular}{c|c}
 Exceptional fillings & Filled manifold \\
\hline\hline
\begin{tabular}{c}
\alza $\left(-1, -3, -1, -5, -3\right)$ \\ $\left(-1, -4, -1, -5, -2\right)$ \\
\alza $\left(-1, -2, -2, -2, -6\right)$ \\ $\left(-1, -2, -3, -1, -6\right)$ \\
\alza $\left(-1, -2, -2, -4, -4\right)$ \\ $\left(-1, -2, -5, -2, -3\right)$ \\
\alza $\left(-1, -2, -3, -4, -3\right)$ \\ \end{tabular}
& $\seiftre {S^2}31 31 4{-3}$ \\
\hline
\begin{tabular}{c}
\alza $\left(-1, -2, -2, -2, -5\right)$ \\ $\left(-1, -2, -3, -1, -5\right)$ \\
\alza $\left(-1, -2, -2, -3, -4\right)$ \\ $\left(-1, -2, -4, -1, -4\right)$ \\
\alza $\left(-1, -3, -1, -4, -3\right)$ \\ $\left(-1, -2, -3, -3, -3\right)$ \\
\alza $\left(-1, -2, -4, -2, -3\right)$ \end{tabular}
& $\seiftre {S^2}2{-1} 41 51$ \\
\hline
\begin{tabular}{c}
\alza $\left(-1, -2, -2, -2, -4\right)$ \\ $\left(-1, -2, -3, -1, -4\right)$ \\
\alza $\left(-1, -3, -1, -3, -3\right)$ \\ $\left(-1, -2, -1, -3, -2\right)$ \end{tabular}
& $\seiftre {S^2}2{-1} 31 71$ \\
\hline
 \begin{tabular}{c}
 \alza $\left(-2, -2, -2, -1, -7\right)$ \\ $\left(-1, -2, -3, -1, -7\right)$ \\
 \alza $\left(-1, -2, -3, -5, -3\right)$ \\ $\left(-1, -2, -6, -2, -3\right)$ \\
 \alza $\left(-1, -3, -1, -6, -3\right)$ \\ $\left(-1, -4, -1, -6, -2\right)$ \\
\alza $\left(-1, -2, -2, -5, -4\right)$ \end{tabular}
& $\seifdue D212{-1} \bigu {-1}110 \seifdue D2131$ \\
\hline
\begin{tabular}{c}
\alza $\left(-1, -2, -\tfrac 23, 4, -3\right)$ \\ $\left(-1, -2, -2, 4, -\tfrac 53\right)$ \\
\alza $\left(-1, -2,  3, -1, -\tfrac 53\right)$ \\  $\left(-1, -\tfrac 12, -1, \tfrac 12, \tfrac 53\right)$ \end{tabular}
&    $\seifdue{D}212{-1}
        \bigcup\nolimits_{{\tiny{\matr 110{-1}}}\phantom{\Big|}\!\!}
        \seifdue{D}2131\phantom{\Big|}$ \\
\end{tabular}
\end{center}
\caption{Closed isolated exceptional fillings on $M_5$ up to the action of the isometry group of $M_5$ (part I).}
\label{M_5_2:table}
\end{table}

\begin{table}
\begin{center}
\begin{tabular}{c|c}
Exceptional fillings & Filled manifold \\
\hline\hline
\begin{tabular}{c}
\alza $\left(-1, -\tfrac 13, -1, \tfrac 23, \tfrac 32\right)$ \\ $\left(-1, -\tfrac 13, \tfrac 52, \tfrac 23, -2\right)$ \end{tabular}
& $\seifdue{D}212{-1}
        \bigcup\nolimits_{{\tiny{\matr 21{-1}{-1}}}\phantom{\Big|}\!\!}
        \seifdue{D}2131\phantom{\Big|}$ \\
\hline
\begin{tabular}{c}
\alza $\left(-1, -3, -1, -4, -4\right)$ \\ $\left(-1, -2, -2, -3, -5\right)$ \\
\alza $\left(-1, -2, -4, -1, -5\right)$ \\ $\left(-1, -2, -4, -3, -3\right)$ \end{tabular}
&     $\seifdue{D}212{-1}
        \bigcup\nolimits_{{\tiny{\matr {-1}21{-1}}}\phantom{\Big|}\!\!}
        \seifdue{D}2131\phantom{\Big|}$ \\
\hline
\begin{tabular}{c}
\alza $\left(-1, -2, -3, -2, -4\right)$ \\ $\left(-1, -3, -3, -1, -4\right)$ \end{tabular}
& $\seifdue{D}212{-1}
      \bigcup\nolimits_{{\tiny{\matr {-1}31{-2}}}\phantom{\Big|}\!\!}
        \seifdue{D}2131\phantom{\Big|}$ \\
\hline
\alza $\left(-1, -3, -2, -2, -3\right)$
& $\seifdue{D}212{-1}
        \bigcup\nolimits_{{\tiny{\matr {-1}41{-3}}}\phantom{\Big|}\!\!}
        \seifdue{D}2131\phantom{\Big|}$ \\
\hline
\alza $\left(-2, -2, -2, -2, -2\right)$
& $\seifdue{D}212{-1}
        \bigcup\nolimits_{{\tiny{\matr {-1}51{-4}}}\phantom{\Big|}\!\!}
        \seifdue{D}2131\phantom{\Big|}$ \\ \hline
\begin{tabular}{c}
\alza $\left(-1, -2, -1, \tfrac 12, \tfrac 12\right)$ \\ $\left(-1, \tfrac 12, 3, -1, -\tfrac 12\right)$ \\
\alza $\left(-2, -1, -\tfrac 12, \tfrac 32, 3\right)$ \\ \end{tabular}
& $T_{\tiny{\matr {-3}1{-1}0}}$ \\
\hline
\begin{tabular}{c}
\alza $\left(-1, -2, -\tfrac 12, 5, 3\right)$ \\ $\left(-1, -2, 4, 5, -\tfrac 32\right)$ \\
\alza $\left(-1, 4, 4, -1, -\tfrac 32\right)$ \\ \end{tabular}
& $\seifuno A 21 \bigb 0110$ \\
\hline
\begin{tabular}{c}
\alza $\left(-2, -1, 2, 4, -\tfrac 13\right)$ \\ $\left(-1, -2, 3, 4, -\tfrac 43\right)$ \\
\alza $\left(-1, -2, \tfrac 12, \tfrac 73, \tfrac 13\right)$ \end{tabular}
& $\seifuno{A}21\big/_{{\tiny{\matr 1110}}\phantom{\Big|}}\phantom{\Big|}$ \\
\hline
\begin{tabular}{c}
\alza $\left(\tfrac 12, -1, -2, \tfrac 13, \tfrac 52\right)$ \\ $\left(-1, -2, \tfrac 32, \tfrac 52, -\tfrac 23\right)$ \\
\alza $\left(-1, \tfrac 12, -1, \tfrac 13, \tfrac 32\right)$ \end{tabular}
& $\seifuno A 21 \bigb 2110$ \\
\hline
\alza $\left(-2, -\tfrac 12, 3, 3, -\tfrac 12\right)$
& $\seifuno A 2{-1} \bigb 1211$ \\
\end{tabular}
\end{center}
\caption{Closed isolated exceptional fillings on $M_5$  up to the action of the isometry group of $M_5$ (part II).}
\label{M_5_3:table}
\end{table}

\clearpage

\vspace{1cm}

{\small
\noindent
\hspace{6cm}
\vbox{\noindent Dipartimento di Matematica\\
Largo Pontecorvo 5\\
56127 Pisa, Italy\\
martelli at dm dot unipi dot it\\
petronio at dm dot unipi dot it}}

\vspace {.5 cm}

{\small
\noindent
\hspace{6cm}
\vbox{\noindent School of Mathematics and Statistics \\
University of Sheffield \\
Hicks Building \\
Hounsfield Road \\
Sheffield, S3 7RH \\
United Kingdom \\
roukema at gmail dot com}}

\end{document}